\newcommand{\norm}[1]{\left\|#1\right\|}
\newcommand{\E}{\mathbb{E}}
\def\<#1,#2>{\left\langle #1,#2 \right\rangle}
\newtheorem{assumption}{Assumption}
\newcommand{\inner}[2]{\left\langle #1, #2\right\rangle}
\newcommand{\summ}[3]{\sum^{ #2 }_{ #1 } #3}
\newcommand{\fsumm}[3]{\frac{1}{M}\sum^{ #2 }_{ #1 } #3}
\newcommand{\coef}[1]{\biggr[ #1 \biggr]}
\newcommand{\mc}{\mathbcal}
\definecolor{darkgreen}{RGB}{0, 128, 0} 
\newcommand{\greencheck}{{\color{darkgreen}\checkmark}} 
\newcommand{\redcross}{{\color{red}\textbf{\texttimes}}}
\def\O{\mathcal{O}}
\begin{document}

\title{Local SGD for Near-Quadratic Problems: Improving Convergence under Unconstrained Noise Conditions}
\titlerunning{Local SGD for Near-Quadratic Problems}
%
\author{Anonymous authors}
\authorrunning{Anonymous authors}
\author{Andrey Sadchikov\inst{1}\and
Savelii Chezhegov\inst{1, 2, 3}\and
Aleksandr Beznosikov\inst{2,3,4} \and
Alexander Gasnikov\inst{4,1,2}
}
\authorrunning{A. Sadchikov, S. Chezhegov, A. Beznosikov, A. Gasnikov}
%
\institute{Moscow Institute of Physics and Technology, Moscow, Russia
\and
Ivannikov Institute for System Programming RAS, Moscow, Russia
\and
Sber AI Lab, Moscow, Russia
\and
Innopolis University, Innopolis, Russia
}

\maketitle

\begin{abstract}

Distributed optimization plays an important role in modern large-scale machine learning and data processing systems by optimizing the utilization of computational resources. One of the classical and popular approaches is Local Stochastic Gradient Descent (Local SGD), characterized by multiple local updates before averaging, which is particularly useful in distributed environments to reduce communication bottlenecks and improve scalability. A typical feature of this method is the dependence on the frequency of communications. But in the case of a quadratic target function with homogeneous data distribution over all devices, the influence of frequency of communications vanishes. As a natural consequence, subsequent studies include the assumption of a Lipschitz Hessian, as this indicates the similarity of the optimized function to a quadratic one to some extent. However, in order to extend the completeness of the Local SGD theory and unlock its potential, in this paper we abandon the Lipschitz Hessian assumption by introducing a new concept of \textit{approximate quadraticity}. This assumption gives a new perspective on problems that have near quadratic properties. In addition, existing theoretical analyses of Local SGD often assume bounded variance. We, in turn, consider the unbounded noise condition, which allows us to broaden the class of studied problems.  
\end{abstract}
\section{Introduction}
Stochastic Gradient Descent (SGD) \cite{robbins1951stochastic} is a classical and widely used algorithm for machine \cite{shalev2014understanding} and deep learning \cite{goodfellow2016deep} tasks. This is due to its easy interpretability and low computational cost. However, this approach has bottlenecks, such as sensitivity to noise, because the computation of the stochastic gradient might occur with large variance. One of the solutions to this problem is parallelization of gradient computations over local data, since collecting a larger set of stochastic realizations of the gradient leads to variance reduction. Such a method is known in the literature as Parallel SGD \cite{mangasarian1995parallel}. Now tasks require substantial computational resources due to the rapid development of artificial intelligence, making Parallel SGD advantageous over classical SGD. Moreover, in such a branch of machine learning as federated learning \cite{konevcny2016federated,kairouz2021advances,karimireddy2020scaffold}, where data privacy \cite{abadi2016deep} plays an important role, collecting information on a single device becomes basically impossible. As a consequence, parallelization of the optimization process is mandatory in these settings because of the nature of the problem.


Despite of the advantages of Parallel SGD, it has a bottleneck, which is expressed in terms of communication \cite{konevcny2016federated,verbraeken2020survey}. Indeed, in reality, the communication process can be expensive compared to the complexity of computing local stochastic gradients. As a consequence, method Local SGD is proposed by \cite{stich2018local,mcmahan2017communication}. When Parallel SGD is based on computing local stochastic gradients at a single point, Local SGD allows local steps to obtain separate point sequences for each computing device.


Local SGD has been widely studied in standard form \cite{stich2018local,khaled2020tighter}, for the variational inequalities \cite{beznosikov2020distributed,beznosikov2024similarity}, in decentralized paradigm \cite{beznosikov2022decentralized,koloskova2020unified}, and with some modifications like momentum \cite{wang2019slowmo}, quantization \cite{basu2019qsparse,reisizadeh2020fedpaq}, variance reduction \cite{liang2019variance,sharma2019parallel} and preconditioning \cite{chezhegov2024local}. In addition, new interpretations of the local technique have emerged, which have resulted in methods known in the literature as FedProx \cite{li2020federated}, SCAFFOLD \cite{karimireddy2020scaffold} and ProxSkip \cite{mishchenko2022proxskip}. Moreover, it is worth mentioning papers related to Hessian similarity \cite{shamir2014communication,hendrikx2020statistically,beznosikov2021distributed,kovalev2022optimal} of local functions, the essence of which is the local computation of one particular device. 


As said before, some interpretations of local approaches are based on the similarity of data distributions across devices. Typically, two main cases are distinguished, \textit{homogeneous} (or \textit{identical}) and \textit{heterogeneous}, that describe whether data distribution across devices are similar or different, respectively. In the Local SGD analysis this plays a huge role, as knowledge about the similarity of data allows to obtain better convergence rates, or explore certain problem structures. Thus, for example, in \cite{zhang2016parallel} it is observed that for quadratic objective functions with homogeneous data distribution the convergence rate of Local SGD is independent of communication frequency. Later, proof of this fact was provided in \cite{woodworth2020local}.
This important observation aligns with the lower bounds established by the Local AC-SA algorithm \cite{woodworth2020local,ghadimi2013optimal}, indicating that Local AC-SA is minimax optimal for quadratic objectives.


A natural development in the study of this phenomenon is the use of the assumption of the Lipschitz Hessian, since it is the condition that specifies the nature of \textit{approximate quadraticity} in the objective function. In this direction it is worth noting the papers \cite{yuan2020federated,glasgow2022sharp}. Nevertheless, the supposition of the Lipschitz Hessian is quite strong, and severely limits the class of problems considered. Therefore, our efforts are focused on finding a weaker requirement and analyzing Local SGD in the perspective of the new assumption.


It is also worth mentioning that the majority of existing research, with the exception of \cite{spiridonoff2021communication,koloskova2020unified,gorbunov2021local}, operates under the consideration of the uniformly bounded variance of stochastic gradients. Nevertheless, this assumption is unrealistic in modern scenarios. Thus, it can be easily shown that even for the case of quadratic loss functions this is not fulfilled. 

Therefore, another aim of our study is to analyze Local SGD for quadratic-like functions under some other conditions on the stochasticity. In particular, we consider the noise with unbounded variance, where the variance of stochastic gradient is proportional to the gradient norm. In the literature this assumption is called \textit{strong growth condition}, which was initially investigated under this designation by \cite{schmidt2013fast}.

To sum up, our contributions can be formulated as follows:
\begin{itemize}
    \item[(a)] We introduce the concept of \textit{approximate quadraticity}, providing a framework to gauge the proximity of various functions to a quadratic form (i.e. \cref{st:st_1}).

    \item[(b)] We establish that advancements in convergence rates for Local SGD on quadratic-like functions are achievable even in the absence of the Lipschitzness of Hessian (Theorems \ref{th:th_1}, \ref{th:th_2} and \ref{th:mu>0,rho>0}).
    
    \item[(c)] We are the first, to the best of our knowledge, to demonstrate that such advancements persist under the strong growth condition (i.e. Assumption~\ref{ass:ass_2}).

    \item[(d)] We present the best known convergence rate for Local SGD (Algorithm~\ref{alg:localsgd}) in the strongly convex and smooth case (i.e. Assumption~\ref{st:st_1}) under the strong growth condition (i.e. Assumption~\ref{ass:ass_2}).
\end{itemize}
\nopagebreak
\begin{table}

\caption{Comparison of works around Local SGD}

\renewcommand{\arraystretch}{2} 

\centering
\footnotesize
\resizebox{\textwidth}{!}{
\begin{threeparttable}
\begin{tabular}{|p{1.8cm}|p{1.6cm}|p{1.9cm}|p{1.5cm}|p{1.7cm}|p{6.7cm}|}
\hline
\textbf{Reference} &
\textbf{Not-Lipschitz Hessian} &
\textbf{Unbounded Gradient} &
\textbf{Noise model} &
\textbf{Convexity} & 
\textbf{Convergence rate} \\



\hline

\multirow{2}{=}{Stich, \cite{stich2018local}} &
\multirow{2}{=}{\greencheck} &
\multirow{2}{=}{\redcross} &
\multirow{2}{=}{Uniform} 
& 
$\mu=0$ & - \\
& & & & 
$\mu > 0$ & $\O \left(
  			\frac{D^2}{K^3} + \frac{\sigma^2}{\mu M T} + 
  			\frac{\kappa G^2}{\mu K^2} \right)$ \tnote{{\color{blue}(a)}} \\

\hline

\multirow{2}{=}{Yuan, et al., \cite{yuan2020federated}} &
\multirow{2}{=}{\redcross} &
\multirow{2}{=}{\greencheck} &
\multirow{2}{=}{Uniform} 
& 
$\mu=0$ & $\tilde{\O} \left( 
            \frac{LD^2}{T} + \frac{\sigma D}{\sqrt{MT}} + \frac{Q^{\frac{1}{3}} \sigma^{\frac{2}{3}} D^{\frac{5}{3}}}{T^{\frac{1}{3}} K^{\frac{1}{3}}}
        \right)$ \tnote{{\color{blue}(b)}} \\
& & & & 
$\mu > 0$ & $\tilde{\O} \left(\text{exp. decay} + \frac{\sigma^2}{\mu M T} + \frac{Q^2                     \sigma^4}{\mu^5 T^2 K^2}   \right)$ 
                \tnote{{\color{blue}(b)}}
                \\

\hline

\multirow{2}{=}{Khaled, et al., \cite{khaled2020tighter}} &
\multirow{2}{=}{\greencheck} &
\multirow{2}{=}{\greencheck} &
\multirow{2}{=}{Uniform} 
& 
$\mu=0$ & ${\O} \left(
          \frac{D^2}{\sqrt{MT}} + \frac{\sigma^2}{L\sqrt{MT}} + \frac{\sigma^2 M}{LK}
          \right)$ \\ 
& & & & 
$\mu > 0$ & $\tilde{\O} \left( 
  			\frac{L D^2 }{T^2} + 
  			\frac{L \sigma^2}{\mu^2 M T} + \frac{L^2 \sigma^2}{\mu^3 T K} \right)$ \\

\hline

\multirow{2}{=}{Woodworth, et al., \cite{woodworth2020local}} &
\multirow{2}{=}{\greencheck} &
\multirow{2}{=}{\greencheck} &
\multirow{2}{=}{Uniform} 
& 
$\mu=0$ & $\O \left(
        \frac{L D^2}{T} 
        + \frac{\sigma D}{\sqrt{MT}} 
        + \left( \frac{ L \sigma^2 D^4}{TK} \right)^{1/3}
        \right)$ \\ 
& & & & 
$\mu > 0$ & $\tilde{\O}
        \left(
        \text{exp.decay} 
        +\frac{\sigma^2}{\mu TM}
        +\frac{L \sigma^2}{\mu^2 TK}
        \right)$  \\

\hline

\multirow{2}{=}{Spiridonoff, et al., \cite{spiridonoff2021communication}} &
\multirow{2}{=}{\greencheck} &
\multirow{2}{=}{\greencheck} &
\multirow{2}{=}{Strong growth} 
& 
$\mu=0$ & - \\
& & & & 
$\mu > 0$ & $\O \left( 
  			\frac{L^2 D^2}{\mu^2 T^2} + 
            \frac{\rho L^4\ln(TK^{-2}) D^2}{\mu^4 T^2} +
  			\frac{L \sigma^2}{\mu^2 M T} + \frac{L^2 \sigma^2}{\mu^3 T K}\right)$\tnote{{\color{blue}(c)}} \\
\hline
\multirow{4}{=}{\textbf{This work}} &
\multirow{2}{=}{\greencheck} &
\multirow{2}{=}{\greencheck} &
\multirow{2}{=}{Uniform} 
& 
$\mu=0$ & $\O \left(
        \frac{L D^2}{T} 
        + \frac{\sigma D}{\sqrt{MT}} 
        + \left( \frac{\bm{\varepsilon} L \sigma^2 D^4}{TK} \right)^{1/3}
        \right)$ 
          \tnote{{\color{blue}(d)}}
          \\
          
& & & & 
$\mu > 0$ \tnote{{\color{blue}(e)}} & $\tilde{\O}
        \left(
        \text{exp.decay} 
        +\frac{\sigma^2}{\mu TM}
        +\frac{\bm{\varepsilon} L \sigma^2}{\mu^2 TK}
        \right)$ 
        \tnote{{\color{blue}(d)}}
     \\

\cline{2-6}

\multirow{4}{=}{} &
\multirow{2}{=}{\greencheck} &
\multirow{2}{=}{\greencheck} &
\multirow{2}{=}{Strong growth} 
& 
$\mu=0$ & - \\
& & & & 
$\mu > 0$ \tnote{{\color{blue}(e)}} & $
    \tilde{\O}
    \left(
    \text{exp.decay} +
    \frac{\sigma^2}{\mu M T} + \frac{\rho L^2 \sigma^2}{\mu^3 M T^2 K}+  \frac{ \bm{\varepsilon} L \sigma^2}{\mu^2 TK}
    \right)$\tnote{{\color{blue}(c)(d)}}
        \\

\hline
\end{tabular}
\begin{tablenotes}
\item General notation: 
$\O$ omits constant factors;
$\tilde{\O}$ omits polylogarithmic and constant factors.
$D = \norm{x_0-x_*}$ -- initial distance to the minimum;
$\sigma$ -- variance of stochastic gradient; 
$\mu$ - strong convexity constant; $L$ -- Lipschitz gradient constant;
$M$ -- number of workers;
$K$ -- number of communication rounds;
$T$ -- total number of iterations;
$\kappa = \frac{L}{\mu}$ -- condition number.
For more detailed explanation see Table~\ref{tab:notation}.
\\
\tnote{{\color{blue}(a)}} $G$ represents the uniform bound for the gradient norm, i.e. $ \forall x: \norm{\nabla F(x)} \leq G$.
\\
\tnote{{\color{blue}(b)}} $Q$ signifies the Lipschitz constant of the Hessian.
\\
\tnote{{\color{blue}(c)}} $\rho$ denotes the strong growth parameter. For more details see Assumption~\ref{ass:ass_2}).
\\
\tnote{{\color{blue}(d)}} $\varepsilon$ shows how far function is from quadratic.
\\
for all functions $\varepsilon \leq 1$ and for quadratic functions $\varepsilon = 0$. For more details see~\cref{st:st_1}.
\\
\tnote{{\color{blue}(e)}} $\mu = \mu_Q + \mu_R$, where $Q$ and $R$ as such convex functions that $Q + R = F$. For more details see \cref{st:st_1}.
\end{tablenotes}
\end{threeparttable}
}
\label{tab:different_row_counts}
\end{table}

\nopagebreak
\section{Related work}

One of the first results of the Local SGD study is \cite{stich2018local}, where the convergence for the strongly convex case was analyzed under condition of bounded gradients, which appears to be quite restrictive. The research is continued by \cite{khaled2020tighter} and \cite{woodworth2020local}, which get rid of the bounded gradient norms assumption and provides the best known rates for convex and $L$-smooth objectives under consideration of uniformly bounded noise model. Moreover, \cite{khaled2020tighter} and \cite{woodworth2020local} show a partially unimprovable result \cite{glasgow2022sharp} for both the identical and heterogeneous case in different convexity considerations. The work \cite{spiridonoff2021communication} generalizes previous ones, replacing uniformly bounded variance assumption with strong growth condition, that captures wider range of problems.  

Regarding the Lipschitz Hessian assumption, one of the main papers is \cite{yuan2020federated} where study is done in the convex and strongly convex cases. It is worth saying that we need to suppose that the data distribution between devices is identical in order to get the benefit from optimization of quadratic objective functions. We provide these bounds in Table~\ref{tab:different_row_counts}.

Our work achieves similar results in terms of the bounds obtained by \cite{woodworth2020local} and outperforms upper bounds from \cite{spiridonoff2021communication}. For more details see \cref{tab:different_row_counts}.

\section{Problem formulation} \label{subsec:problem_formulation}

Let us formally introduce the problem we solve. Consider a scenario where \( M \) devices \( 1, 2, \ldots, M \), collectively solve an optimization problem, aiming to find:
\begin{align}
\label{problem_main}
    x_* := \arg\min_{x \in \mathbb{R}^d} 
\left( F(x) = \frac{1}{M} \sum_{m=1}^{M} F_m(x) \right),
\end{align}
where $F_m(x) = \mathbb{E}_{z_m\sim\mathcal{D}_m}[F_m(x, z_m)] $ is the loss function for the $m^{th}$-client. Here, $\mathcal{D}_m$ denotes data distribution for the $m^{th}$-client and \(x\) denotes parameters of the model.

Since the consideration of \textit{approximate quadraticity} requires a homogeneous distribution of data across devices, $\mathcal{D}_m$ can be replaced by $\mathcal{D}$, which is some distribution of data characterizing each client. As a consequence, we get
\begin{align}
    \label{eq:2904_1}
    F_1(x) = \ldots = F_M(x) = F(x),
\end{align}
where $F(x)$ is defined as follows:
\[ F(x) := \mathbb{E}_{z \sim \mathcal{D}} [F(x, z)].\]

Now we are ready to get familiar with the formal definition of Local SGD.
\begin{algorithm}
    \caption{Local SGD}
    \label{alg:localsgd}
    \begin{algorithmic}[1]
        \State \textbf{Input:} 
        Initial vector $x_0 = x^m_0$ for all $m \in \{1, \ldots, M\}$; 
        stepsize $\gamma > 0$; $K$ -- number of communication rounds; $H$ -- number of local steps; $T = KH$ -- number of total iterations;
        \For{$t = 0, \ldots, T - 1$}
            \For{$m = 1, \ldots, M$ in parallel}
                \State Sample $z^m_t \sim \mathcal{D}$
                \State Evaluate stochastic gradient $\mc{g}_t^m = \nabla F(x_t^m, z^m_t)$
            \EndFor
            \If{$t+1 \mod H = 0$}
                \State $x^m_{t+1} = \frac{1}{M} \sum^M_{j = 1} (x^j_t - \gamma \mc{g}^j_t) $
            \Else
                \State $x^m_{t+1} = x_t^m - \gamma \mc{g}_t^m$ \label{eq:upd_eq}
            \EndIf
        \EndFor
    \end{algorithmic}
\end{algorithm}
The algorithm is constructed as follows. Let there be $M$ clients communicating only with the server. At each iteration, the $m^{th}$-client computes the local stochastic gradient $\mc{g}_t^m$ and makes an SGD step (line 10). If there is a communication moment (see lines 7-8), each node forwards the current point to the server, which averages it and sends it back to each client. The number of local steps is denoted by $H$, the number of communication rounds by $K$. Then the total number of iterations of the algorithm is $T = KH$. 



It is important to highlight how we measure complexity in distributed optimization. Unlike classical optimization, where complexity is denoted as the number of times the oracle is called, i.e., computing local gradients $\nabla F(x^m, z^m)$, we also consider the communication complexity. Generally, we define it as the quantity of information we transmit or the number of communications in our case.
Thus, in this work, we emphasize reducing the number of communication rounds $K$ or increasing the number of SGD steps between exchanging the information, denoted as $H$, while keeping the total number of SGD steps $T$ unchanged.


\section{Settings and Notation} 
In this section, we introduce the set of assumptions under which we explore Local SGD.
\begin{definition}[Strong convexity and smoothness]\label{def:def_1} 
    The function $f$ is $\mu$-strongly convex and $L$-smooth, if  $\forall x, y \in \mathbb{R}^d$,
    \[
    \frac{\mu}{2} \norm{x - y}^2 \leq f(y) - f(x) - \inner{\nabla f(x)}{y-x} \leq \frac{L}{2} \norm {x-y}^2.
    \]
\end{definition}
\begin{corollary} \label{cor:nesterov}
Suppose that the function $f$ is $0$-strongly convex and $L$-smooth. Then
    \[
    \frac{1}{2L} \norm{\nabla f(x) - \nabla f(y)}^2 \leq f(y) - f(x) - \inner{\nabla f(x)}{y-x}.
    \]
\end{corollary}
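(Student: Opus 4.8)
The plan is to reduce the inequality to a statement about the global minimizer of a linearly shifted copy of $f$, and then apply the quadratic upper bound (the ``descent lemma'') already contained in \cref{def:def_1}. Fix $x,y \in \mathbb{R}^d$ and set $g(z) := f(z) - \inner{\nabla f(x)}{z}$. Since $g$ differs from $f$ only by a linear term, $g$ is again $0$-strongly convex and $L$-smooth in the sense of \cref{def:def_1}, with $\nabla g(z) = \nabla f(z) - \nabla f(x)$. In particular $\nabla g(x) = 0$, so by convexity $x$ is a global minimizer of $g$: $g(x) = \min_{z} g(z)$.

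Next I would estimate $g$ at the point produced by one gradient step started from $y$. Applying the upper inequality of \cref{def:def_1} to $g$ gives, for all $u,v$,
\[
g(v) \le g(u) + \inner{\nabla g(u)}{v-u} + \frac{L}{2}\norm{v-u}^2 .
\]
Taking $u = y$ and $v = y - \tfrac1L \nabla g(y)$ yields $g\!\left(y - \tfrac1L \nabla g(y)\right) \le g(y) - \tfrac{1}{2L}\norm{\nabla g(y)}^2$. Chaining this with $g(x) \le g\!\left(y - \tfrac1L \nabla g(y)\right)$ (global optimality of $x$) gives $\tfrac{1}{2L}\norm{\nabla g(y)}^2 \le g(y) - g(x)$.

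Finally I would translate back to $f$: one computes $g(y) - g(x) = f(y) - f(x) - \inner{\nabla f(x)}{y - x}$ and $\nabla g(y) = \nabla f(y) - \nabla f(x)$, which turns the previous display into exactly the claimed bound. I do not expect a genuine obstacle; the only points needing a line of justification are that the linear shift preserves both convexity and $L$-smoothness in the form stated in \cref{def:def_1}, and that a convex function whose gradient vanishes at a point attains its global minimum there — both standard. An alternative, fully symmetric derivation would add the analogous inequality with $x$ and $y$ interchanged to simultaneously obtain co-coercivity of $\nabla f$, but for the stated one-sided estimate the single-anchor argument above is enough.
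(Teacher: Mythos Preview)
Your argument is correct and is exactly the standard Nesterov ``shift by a linear functional and do one gradient step'' proof of this inequality. The paper does not supply its own proof of \cref{cor:nesterov} --- it is stated as a corollary of \cref{def:def_1} without justification --- so there is nothing to compare against; your write-up would in fact fill that gap cleanly.
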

After the definition above, we introduce the assumption about \textit{approximate quadraticity}.
\begin{assumption}[{$\varepsilon$ - decomposition}]\label{st:st_1}
    Assume that the objective function can be decomposed as $F = Q + R$, where $Q$ is a quadratic $\mu_Q$-strongly convex and $L_Q$-smooth function, $R$ is a $\mu_R$-strongly convex and $L_R$-smooth function, and $F$ is a $L$-smooth function.
\end{assumption}
Under \cref{st:st_1} we denote $\varepsilon := \frac{L_R}{L}$. We also introduce the sum of $\mu_Q$ and $\mu_R$ as $\mu$, and $\frac{L}{\mu}$ as $\kappa$.
\begin{corollary} \label{cor:linearity}
    Suppose that Assumption~\ref{st:st_1} holds. Then the following takes place:
    \begin{enumerate}
    \item[a)] $\nabla Q$ is a linear function;
    \item[b)] \(\varepsilon \leq 1\);
    \item[c)] $F$ is $\mu$-strongly convex function.
\end{enumerate}
\end{corollary}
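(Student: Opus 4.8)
The plan is to verify the three claims of \cref{cor:linearity} by unpacking \cref{st:st_1} directly.

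For part (a), I would argue that a quadratic function $Q$ has, by definition, the form $Q(x) = \tfrac12 x^\top A x + b^\top x + c$ for some symmetric matrix $A$, vector $b$, and scalar $c$. Its gradient $\nabla Q(x) = Ax + b$ is an affine (hence ``linear'' in the paper's loose terminology) map. The only subtlety is pinning down what ``quadratic'' means in the paper's conventions; since \cref{st:st_1} already asserts $Q$ is $\mu_Q$-strongly convex and $L_Q$-smooth, the matrix $A$ satisfies $\mu_Q \Id \preceq A \preceq L_Q \Id$, which is consistent and adds nothing to the argument for (a).

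For part (b), recall $\varepsilon := L_R/L$. Since $F = Q + R$ with $R$ being $L_R$-smooth, I would use the fact that for a twice-differentiable (or merely smooth-in-the-Definition~\ref{def:def_1} sense) decomposition, the curvature of $F$ dominates that of $R$ once we subtract the quadratic part: writing $\nabla F - \nabla Q = \nabla R$ and using that $\nabla Q$ is affine (part (a)), the ``second-order increments'' of $F$ and $R$ coincide. Concretely, from Definition~\ref{def:def_1} applied to $R$ one gets $\inner{\nabla R(x)-\nabla R(y)}{x-y}\le L_R\norm{x-y}^2$, and since $\nabla Q$ is affine with $\inner{\nabla Q(x)-\nabla Q(y)}{x-y}\ge 0$ (as $Q$ is convex), we obtain $\inner{\nabla F(x)-\nabla F(y)}{x-y}\ge \inner{\nabla R(x)-\nabla R(y)}{x-y}$. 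Combined with $L$-smoothness of $F$ this forces $L_R \le L$, i.e. $\varepsilon \le 1$. The one delicate point — and the main obstacle — is making sure the inequality goes the right way: one needs $L_R$ to be the \emph{smallest} smoothness constant of $R$ (or at least that the decomposition is chosen with $L_R$ no larger than necessary), otherwise $\varepsilon \le 1$ could fail for a badly chosen decomposition. I expect the intended reading is that $L_R$ is taken as the smoothness constant of $R := F - Q$, and then the bound $L_R = L_{F-Q} \le L_F + L_{-Q}$ is not quite enough; instead one argues directly as above that the curvature of $F-Q$ is bounded by that of $F$ because subtracting the convex quadratic $Q$ only decreases upper curvature.

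For part (c), I would simply add the strong convexity lower bounds from Definition~\ref{def:def_1} for $Q$ and $R$: $Q(y)-Q(x)-\inner{\nabla Q(x)}{y-x}\ge \tfrac{\mu_Q}{2}\norm{x-y}^2$ and likewise for $R$ with $\mu_R$; summing and using $F=Q+R$, $\nabla F = \nabla Q + \nabla R$ gives $F(y)-F(x)-\inner{\nabla F(x)}{y-x}\ge \tfrac{\mu_Q+\mu_R}{2}\norm{x-y}^2 = \tfrac{\mu}{2}\norm{x-y}^2$, which is exactly $\mu$-strong convexity of $F$. This part is routine and carries no obstacle. Overall the proof is short; the only place demanding care is justifying $\varepsilon\le 1$ in part (b), where the affineness of $\nabla Q$ from part (a) is the crucial lever.
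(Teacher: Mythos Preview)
Your proposal is correct and is the natural argument; the paper itself states \cref{cor:linearity} without proof, treating all three items as immediate consequences of \cref{st:st_1}. Parts (a) and (c) are exactly as you describe. For (b), your chain $\inner{\nabla R(x)-\nabla R(y)}{x-y} \le \inner{\nabla F(x)-\nabla F(y)}{x-y} \le L\norm{x-y}^2$ (using convexity of $Q$ and $L$-smoothness of $F$) correctly shows that $R$ is $L$-smooth, and your caveat is exactly right: the conclusion $\varepsilon = L_R/L \le 1$ then requires $L_R$ to denote the \emph{tight} smoothness constant of $R=F-Q$, a convention the paper never spells out but clearly intends.
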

In order to show theoretical convergence guarantees, we introduce assumption on the stochastic oracle: unbiasness and strong growth condition.
\begin{assumption}[Stochastic oracle] \label{ass:ass_2}
    There exist constants $\sigma$ and $\rho$ such that:
    \begin{align*}
     \hspace{-5cm}\E_{z \sim \mathcal{D}}[\nabla F(x, z)] = F(x), \qquad &(\textit{unbiased oracle})\\
     \E_{z \sim \mathcal{D}} \norm {\nabla F(x, z) - \nabla F(x)}^2 \leq \sigma^2
    + \rho \norm{\nabla F(x)}^2. \qquad &(\textit{strong growth condition})
    \end{align*}
\end{assumption}



\section{Convergence Analysis} 
In this section, we present the convergence results of \cref{alg:localsgd} with different settings. First, we start with the investigation of the strongly convex case under bounded variance assumption.
\begin{theorem}[Bounded variance with $\mu > 0$] \label{th:th_1}
    Suppose that Assumptions~\ref{st:st_1} and \ref{ass:ass_2} hold with $\mu > 0$ and $\rho = 0$. Then after $T = KH$ iterations of \cref{alg:localsgd}, if $T \leq 2\kappa$, we choose $\gamma_t = \frac{1}{6L}$ and have
    \begin{align*}
        \E [F(\tilde{x}_T) - F(x_*)] = {\O}
        \left(
        L\exp{\left(\frac{-\mu T}{L}\right)} \norm{x_{0} - x_*}^2
        +\frac{\sigma^2}{\mu TM}
        +\frac{\varepsilon L \sigma^2}{\mu^2 TK}
        \right).
    \end{align*}
    On the other hand, for the case $T > 2\kappa$ we choose the sequence $\{\gamma_t\}_{t=0}$ in the following way:
    \begin{align*}
    \gamma_t=
        \begin{cases}
             \frac{1}{6L}, \qquad \qquad &\text{if } t \leq \frac{T}{2},
            \\
            \frac{2}{\mu(\xi + t)}, \qquad \qquad &\text{if } t > \frac{T}{2},
        \end{cases}
    \end{align*}    
with $\xi = \frac{12 L}{\mu} - \frac{T}{2}$. For this choice of the steps, we get
    \begin{align*}
        \E [F(\tilde{x}_T) - F(x_*)]
        =
        \tilde{\O}
        \left(
        L\exp{\left(\frac{-\mu T}{L}\right)} \norm{x_{0} - x_*}^2
        +\frac{\sigma^2}{\mu TM}
        +\frac{\varepsilon L \sigma^2}{\mu^2 TK}
        \right).
    \end{align*}
\end{theorem}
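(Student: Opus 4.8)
The plan is to run the standard Local SGD argument organized around the \emph{virtual average} $\bar x_t := \frac1M\sum_{m=1}^M x_t^m$. Because averaging is linear and a communication step merely replaces every $x_t^m$ by their common mean, $\bar x_t$ obeys the single recursion $\bar x_{t+1} = \bar x_t - \gamma \bar g_t$ with $\bar g_t := \frac1M\sum_m \nabla F(x_t^m,z_t^m)$ at \emph{every} iteration, regardless of communication. Expanding $\norm{\bar x_{t+1}-x_*}^2$ and taking conditional expectation, the martingale part contributes $\gamma^2\,\E\norm{\bar g_t-\frac1M\sum_m\nabla F(x_t^m)}^2 \le \gamma^2\sigma^2/M$, which is exactly where $\rho=0$ and the averaging over $M$ independent clients are used. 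The key algebraic step is to compare $\frac1M\sum_m\nabla F(x_t^m)$ with $\nabla F(\bar x_t)$: writing $F=Q+R$ and using that $\nabla Q$ is affine (\cref{cor:linearity}(a)), the quadratic contributions collapse \emph{exactly}, $\frac1M\sum_m\nabla Q(x_t^m)=\nabla Q(\bar x_t)$, so the only discrepancy is $b_t := \frac1M\sum_m(\nabla R(x_t^m)-\nabla R(\bar x_t))$, controlled by $L_R$-smoothness of $R$: $\norm{b_t}^2\le L_R^2 V_t = \varepsilon^2 L^2 V_t$, where $V_t := \frac1M\sum_m\norm{x_t^m-\bar x_t}^2$ is the consensus error. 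Together with $\mu$-strong convexity of $F$ (\cref{cor:linearity}(c)), $L$-smoothness plus \cref{cor:nesterov} (applied at $x=x_*$, $y=\bar x_t$) to turn the residual $\gamma^2\norm{\nabla F(\bar x_t)}^2$ into a fraction of $F(\bar x_t)-F(x_*)$ that is absorbed thanks to $\gamma\le\tfrac1{6L}$, and a Young step on $\inner{b_t}{\bar x_t-x_*}$, this yields a one-step inequality of the shape
\[
\E\norm{\bar x_{t+1}-x_*}^2 \;\le\; \Bigl(1-\tfrac{\gamma\mu}{2}\Bigr)\,\E\norm{\bar x_t-x_*}^2 \;-\; \tfrac{\gamma}{2}\,\E[F(\bar x_t)-F(x_*)] \;+\; \tfrac{\gamma^2\sigma^2}{M} \;+\; c\,\gamma\kappa\,\varepsilon^2 L^2\,\E V_t .
\]

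Next I would bound the consensus error $V_t$ inside a communication round, where it resets to $0$ at the last averaging time $s_0$. The deviations $e_t^m := x_t^m-\bar x_t$ evolve as $e_{t+1}^m = (\Id-\gamma A)e_t^m - \gamma\bigl(\nabla R(x_t^m)-\tfrac1M\sum_j\nabla R(x_t^j)\bigr) - \gamma(\xi_t^m-\bar\xi_t)$, where $A\succeq0$ is the constant Hessian of $Q$ with $\gamma A\preceq\gamma L_Q\Id\preceq\Id$ (so $\Id-\gamma A$ is non-expansive, and a genuine contraction on $\mathrm{range}(A)$, sharper when $\mu_Q>0$), and $\xi_t^m:=\nabla F(x_t^m,z_t^m)-\nabla F(x_t^m)$, $\bar\xi_t:=\frac1M\sum_j\xi_t^j$. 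Unrolling from $s_0$ and splitting into the $R$-part and the (martingale) noise part, Cauchy--Schwarz over the at most $H$ steps of the round gives a Gr\"onwall-type bound $V_t \lesssim \gamma^2 H L_R^2\sum_{s=s_0}^{t-1} V_s + \gamma^2 H\sigma^2$, which can be resolved (using $\gamma\le\tfrac1{6L}$, $\varepsilon\le1$, the contraction supplied by the quadratic part, and in the second phase the shrinking stepsize) to $\E V_t\lesssim\gamma^2 H\sigma^2=\gamma^2\tfrac TK\sigma^2$. The conceptual point — and the reason the final $K$-dependent term is both small and proportional to $\varepsilon$ (and vanishes for quadratic $F$, recovering the phenomenon of \cite{woodworth2020local}) — is that the quadratic part moves client deviations only through the non-expansive operator $\Id-\gamma A$ and therefore does not feed the drift accumulation at all; only the $\varepsilon$-scaled part $R$ and the stochastic noise do. This estimate is the technically delicate part.

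It remains to substitute the bound on $\E V_t$ into the one-step inequality and telescope. For $T\le2\kappa$ with constant $\gamma=\tfrac1{6L}$ I would use the geometric weights $w_t\propto(1-\tfrac{\gamma\mu}{2})^{-t}$ and set $\tilde x_T=\sum_t w_t\bar x_t/\sum_t w_t$: by convexity the initialization term becomes $\O\bigl(L e^{-\mu T/L}\norm{x_0-x_*}^2\bigr)$ in this regime, the noise floor is $\tfrac{\gamma\sigma^2}{M}=\O\bigl(\tfrac{\sigma^2}{\mu TM}\bigr)$ since $T\le2\kappa$ forces $\gamma\le\tfrac1{3\mu T}$, and the $V_t$-term contributes $\O\bigl(\tfrac{\varepsilon L\sigma^2}{\mu^2 TK}\bigr)$ after the same simplification (and $\varepsilon\le1$). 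For $T>2\kappa$ the same ingredients feed into the classical two-phase analysis (constant $\tfrac1{6L}$ up to $T/2$, then $\tfrac{2}{\mu(\xi+t)}$) with polynomially increasing weights on the decreasing phase, as in \cite{stich2018local,woodworth2020local}, producing the stated $\tilde\O$ bound, with the polylogarithm coming from the stepsize switch. The main obstacle throughout is the consensus-error estimate of the second paragraph: closing its Gr\"onwall recursion with the $L_R=\varepsilon L$ scaling rather than $L$ — equivalently, correctly isolating the contracting linear dynamics induced by $Q$ — is precisely what makes approximate quadraticity pay off in the communication term.
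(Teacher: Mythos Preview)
Your one-step inequality is where the argument breaks. When you use Young's inequality on $-2\gamma\langle \bar x_t - x_*, b_t\rangle$, the only way to absorb the $\|\bar x_t - x_*\|^2$ part is into the $\gamma\mu$ contraction (or, equivalently, via strong convexity into the $F$-gap), which forces the Young weight to be of order $\mu$ and leaves you with a $V_t$-coefficient of order $\gamma\varepsilon^2 L^2/\mu = \gamma\kappa\,\varepsilon^2 L$. Plugging in the (correct) drift bound $\E V_t \lesssim \gamma^2 H\sigma^2$ and telescoping, the communication term you obtain scales like $\kappa\,\varepsilon^2\, \gamma^2 H\sigma^2$ rather than the stated $\varepsilon\,\gamma^2 H\sigma^2$; this matches $\varepsilon L\sigma^2/(\mu^2 TK)$ only when $\varepsilon\kappa\lesssim 1$, so it does not prove the theorem in general.

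The paper avoids this $\kappa$ loss by \emph{not} comparing $\bar r_t=\frac1M\sum_m\nabla R(x_t^m)$ to $\nabla R(\bar x_t)$. Instead it splits
\[
-2\langle \bar x_t-x_*,\bar r_t\rangle
=\frac2M\sum_m\langle x_*-x_t^m,\nabla R(x_t^m)\rangle
+\frac2M\sum_m\langle x_t^m-\bar x_t,\nabla R(x_t^m)-\nabla R(x_*)\rangle,
\]
bounds the first sum by $\mu_R$-strong convexity at each $x_t^m$, and Young's the second with weight $2L_R$ (not $\mu$); the resulting $\tfrac1{2L_R}\|\nabla R(x_t^m)-\nabla R(x_*)\|^2$ is then converted via $L_R$-smoothness (Corollary~\ref{cor:nesterov}) into a Bregman term of $R$ that cancels against half of the strong-convexity contribution. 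The net outcome is a $V_t$-coefficient $2\gamma L_R=2\gamma\varepsilon L$ with no $\mu^{-1}$. A second, smaller misconception in your plan is where the $\varepsilon$-saving actually lives: it is entirely in this coefficient, not in the consensus bound. The paper uses the plain estimate $\E V_t\le (H-1)\gamma^2\sigma^2$ (quoted from \cite{khaled2020tighter}); your Gr\"onwall/contraction argument for $V_t$ ends at the same $\gamma^2 H\sigma^2$ order and is unnecessary here.
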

\begin{remark}
    Hereafter the point $\tilde{x}_T$ denotes the weighted average of the points $\{\bar{x}_t\}_{t=0}^{T-1}$. What is more, the weights differs from one theorem to another. For more details see Appendix \ref{sec: proof-thm-1}, \ref{sec: proof-thm-2} and \ref{sec: proof-thm-3}.
\end{remark}
\begin{remark}
\label{rem: 2}
    This result is correlated with \cite{yuan2020federated}, where the Lipschitzess of the Hessian is assumed. Indeed, if $\varepsilon = 0$, then $Q = 0$ (see \cref{tab:different_row_counts}). However, we can see an improvement in terms of power of $D$ in the final bound: $\nicefrac{4}{3}$ in our paper instead of $\nicefrac{5}{3}$. Moreover, this result is consistent with the analysis from \cite{woodworth2020local}
    except the last term, which takes into account the frequency of communications. For the quadratic problem (i.e., when $\varepsilon = 0$) there is no dependence on $H$, then the last summand goes to zero, whereas \cite{woodworth2020local} does not consider such an effect. 
\end{remark} 
\noindent Next, we introduce the convergence result for the convex case under bounded variance.
\begin{theorem}[Bounded variance with $\mu = 0$] \label{th:th_2}
    Suppose that Assumptions \ref{st:st_1} and \ref{ass:ass_2} hold with $\mu = 0, \rho = 0$. Then after $T = KH$ iterations of \cref{alg:localsgd}, with the choice of stepsizes as 
    \begin{align*}
        \gamma_t = \gamma = \begin{cases}
            \min \left\{  \frac{1}{6L}, \frac{\norm{r_0} \sqrt{M}}{\sigma \sqrt{T}}  \right\}, \qquad \qquad &\text{ if } H = 1 \text{ or } M = 1, \\
            \min \left\{ \frac{1}{6L}, \frac{\norm{r_0} \sqrt{M}}{\sigma \sqrt{T}}, \left(\frac{\norm{r_0}^2}{L_R \sigma^2TH} \right)^{1/3} \right\}, \qquad \qquad&\text { else, }
        \end{cases}
    \end{align*}
    we gain:
    \begin{align*}
        \E [F(\tilde{x}_T)] - F(x_*)
        = \O \left(
        \frac{L \norm{r_0}^2}{T} 
        + \frac{\sigma \norm{r_0}}{\sqrt{MT}} 
        + \left( \frac{\varepsilon L \sigma^2 \norm{r_0}^4}{TK} \right)^{1/3}
        \right).
    \end{align*}
\end{theorem}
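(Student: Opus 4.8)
The plan is to run the standard Local SGD descent analysis but to exploit the $\varepsilon$-decomposition $F = Q + R$ when controlling the ``client drift'' term, so that the drift is weighted by $L_R = \varepsilon L$ rather than by $L$ itself. I would track the averaged iterate $\bar x_t = \frac{1}{M}\sum_{m} x_t^m$ and write one step of the recursion for $\norm{\bar x_{t+1} - x_*}^2$. Using unbiasedness and $L$-smoothness + convexity of $F$ (Definition~\ref{def:def_1} and Corollary~\ref{cor:nesterov}), together with the stepsize bound $\gamma \le \frac{1}{6L}$, the expected one-step inequality should take the form
\begin{align*}
\E\norm{\bar x_{t+1} - x_*}^2 \le \E\norm{\bar x_{t} - x_*}^2 - c\gamma\,\E[F(\bar x_t) - F(x_*)] + \frac{\gamma^2 \sigma^2}{M} + (\text{drift correction}),
\end{align*}
where the drift correction involves $\frac{\gamma}{M}\sum_m \E\<\nabla F(\bar x_t) - \nabla F(x_t^m), \cdot>$ or a term like $\frac{L\gamma}{M}\sum_m \E\norm{x_t^m - \bar x_t}^2$. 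The key observation is that for the quadratic part $Q$, $\nabla Q$ is affine (Corollary~\ref{cor:linearity}a), so $\frac{1}{M}\sum_m \nabla Q(x_t^m) = \nabla Q(\bar x_t)$ \emph{exactly}; hence only $\nabla R$ contributes to the consensus error in the gradient average, and that contribution is controlled by $L_R$-smoothness of $R$. This is exactly what replaces the $L$ in the communication term by $\varepsilon L$.

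Next I would bound the consensus/drift quantity $V_t := \frac{1}{M}\sum_m \E\norm{x_t^m - \bar x_t}^2$. Since resets to consensus happen every $H$ steps, within a round the iterates satisfy $x_{t+1}^m - \bar x_{t+1} = (x_t^m - \bar x_t) - \gamma(\mc g_t^m - \bar{\mc g}_t)$, and unrolling from the last communication point (where $V = 0$) gives the standard estimate $V_t = \O(\gamma^2 H \sigma^2) + \O(\gamma^2 H \sum_{s} \norm{\nabla F(x_s^m)}^2)$ type bound; feeding smoothness and $F(x_s^m) - F(x_*)$ back in, and using $\gamma H L \le \gamma H L$ small enough (which holds since $\gamma \le (\norm{r_0}^2/(L_R\sigma^2 T H))^{1/3}$ forces $\gamma^2 H L_R$ small), the drift term is absorbed into the negative $\E[F(\bar x_t)-F(x_*)]$ term up to a residual of order $\gamma^3 H L_R \sigma^2 = \varepsilon L \gamma^3 H \sigma^2$ per step. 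Telescoping over $t = 0,\dots,T-1$, dividing by $\gamma T$, and applying Jensen to $\tilde x_T$ (the uniform or appropriately weighted average of $\{\bar x_t\}$) yields
\begin{align*}
\E[F(\tilde x_T) - F(x_*)] = \O\!\left( \frac{\norm{r_0}^2}{\gamma T} + \gamma \frac{\sigma^2}{M} + \gamma^2 H L_R \sigma^2 \right).
\end{align*}

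Finally I would optimize over $\gamma$ subject to $\gamma \le \frac{1}{6L}$. Balancing the three terms $\frac{\norm{r_0}^2}{\gamma T}$, $\gamma \sigma^2/M$, and $\gamma^2 H \varepsilon L \sigma^2$ gives the three candidate values $\frac{1}{6L}$, $\frac{\norm{r_0}\sqrt M}{\sigma\sqrt T}$, and $(\norm{r_0}^2/(L_R\sigma^2 T H))^{1/3}$ (using $T = KH$ to rewrite $TH = T^2/K$, so the last term becomes $(\varepsilon L\sigma^2\norm{r_0}^4/(TK))^{1/3}$); taking the min of these and plugging back in produces the claimed rate $\O\big(\frac{L\norm{r_0}^2}{T} + \frac{\sigma\norm{r_0}}{\sqrt{MT}} + (\frac{\varepsilon L\sigma^2\norm{r_0}^4}{TK})^{1/3}\big)$, with the $H=1$ or $M=1$ case dropping the drift term entirely since then $V_t \equiv 0$. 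I expect the main obstacle to be the drift estimate: one must carefully separate $\nabla Q$ (affine, no drift contribution to the gradient average) from $\nabla R$ while still handling the $\norm{\nabla F(x_s^m)}^2$ terms that arise from unrolling the local steps, and show they can be re-expressed via $F(x_s^m)-F(x_*)$ and reabsorbed without losing the $\varepsilon$ factor — in particular ensuring the stepsize restriction is exactly strong enough to make the absorption work without any hidden dependence on $L$ (as opposed to $L_R$) in the communication term.
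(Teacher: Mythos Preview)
Your overall route matches the paper's: establish a one-step descent inequality for $\norm{\bar x_t - x_*}^2$ in which the drift term carries weight $L_R$ (via linearity of $\nabla Q$), bound $\E[V_t]$, telescope, and optimize $\gamma$. The descent inequality you sketch is exactly Lemma~\ref{lem:very_main} specialized to $\mu=0$, $\rho=0$, and your stepsize optimization at the end is correct.

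The gap is in your treatment of the drift. Your proposed estimate $V_t = \O(\gamma^2 H\sigma^2) + \O(\gamma^2 H \sum_s \norm{\nabla F(x_s^m)}^2)$ followed by absorption of the gradient-norm part into $-c\gamma(F(\bar x_t)-F_*)$ would, after multiplying by $2\gamma L_R$ and summing, require something like $L L_R \gamma^2 H^2 \lesssim 1$; this is \emph{not} implied by the stated stepsize, and your parenthetical justification (that $\gamma \le (\norm{r_0}^2/(L_R\sigma^2 TH))^{1/3}$ forces $\gamma^2 H L_R$ small) is incorrect---that choice only gives $\gamma^3 L_R\sigma^2 TH \le \norm{r_0}^2$. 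The paper sidesteps this entirely by using the clean bound $\E[V_t]\le (H-1)\gamma^2\sigma^2$ (Lemma~\ref{lem:lemma_Vt}(a)), which in the homogeneous, convex, $\rho=0$ setting holds with \emph{no} gradient-norm residual. The mechanism (visible in the proof of Lemma~\ref{lem:lemma_Vt}(d) with $\rho=\mu=0$): in the $V_t$ recursion the term $\frac{1}{M}\sum_m\norm{g_t^m-\bar g_t}^2$ is bounded by $\frac{2L}{M}\sum_m\langle x_t^m-\bar x_t, g_t^m\rangle$ via Corollary~\ref{cor:nesterov}, and this merges with the cross term $-\frac{2\gamma}{M}\sum_m\langle x_t^m-\bar x_t,g_t^m\rangle$ to give $-2\gamma(1-\gamma L)$ times a quantity that is nonnegative by convexity and Jensen, hence can be dropped. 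Only $\gamma^2\sigma^2$ survives per local step. With this in hand the obstacle you flag at the end never arises, and the rest of your argument goes through verbatim.
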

\begin{remark}
This result is also consistent with the analysis from \cite{woodworth2020local}, except for the last term. The description of the difference is the same as in \cref{rem: 2}. Nevertheless, our bound is worse than in \cite{yuan2020federated}, if we compare the terms related to approximate quadraticity. This is due to different assumptions on the stochastic oracle; the \cite{yuan2020federated} considers a $4^{th}$-bounded central moment of the stochastic oracle, whereas we get $2^{th}$-bounded central moment when $\rho = 0$.    
\end{remark}
\noindent Finally, we present the theoretical guarantees of convergence for the novel case:  strongly convex case under unbounded variance for the near-quadratic functions.
\begin{theorem}[Unbounded variance with $\mu > 0$] \label{th:mu>0,rho>0}
    Suppose Assumptions \ref{st:st_1} and \ref{ass:ass_2} hold for $\mu > 0$. Then, after $T = KH$ iterations of \cref{alg:localsgd} with
    \begin{align*}
        \gamma_t = \gamma = \min\left\{\frac{\mu}{3\rho L^2}, \frac{1}{6L}, \frac{\sqrt{\mu}}{\sqrt{6CH\rho L^2}}, \frac{\ln\left(\max\left\{2, \mu^2\norm{r_0}^2T^2M/\sigma^2\right\}\right)}{2\mu T}\right\},
    \end{align*}
    where $C = \mu + L_R$, we obtain
    \begin{align*}
        &\E [F(\tilde{x}_t) - F(x_*)] =\\ &\tilde{\O}\Bigg(\norm{r_0}^2\max\left\{\frac{\rho L^2}{\mu}, L, \frac{\sqrt{CH\rho L^2}}{\sqrt{\mu}}\right\}\exp\left(-\mu T \min\left\{\frac{\mu}{\rho L^2}, \frac{1}{L}, \frac{\sqrt{\mu}}{\sqrt{CH\rho L^2}}\right\}\right) \\&\hspace{6.6cm}+ \frac{\sigma^2}{\mu M T} + \frac{\rho L^2H\sigma^2}{M\mu^3T^3}+  \frac{\varepsilon LH\sigma^2}{\mu^2 T^2}\Bigg).
    \end{align*}
\end{theorem}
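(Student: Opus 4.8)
\textbf{Proof proposal for Theorem~\ref{th:mu>0,rho>0}.}

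The plan is to run the standard Local SGD descent-lemma machinery, but to control the ``consensus error'' (the deviation of the local iterates $x_t^m$ from their average $\bar x_t$) using \emph{approximate quadraticity} rather than Lipschitz Hessian, and to absorb the multiplicative part of the noise via the strong growth condition. Concretely, I would first set $\bar x_t = \frac1M\sum_m x_t^m$ and write the one-step recursion for $\E\norm{\bar x_{t+1}-x_*}^2$. Using $\mu$-strong convexity and $L$-smoothness of $F$ (Corollary~\ref{cor:linearity}c together with Definition~\ref{def:def_1}), plus unbiasedness and the strong growth bound $\E\norm{\nabla F(x,z)-\nabla F(x)}^2\le\sigma^2+\rho\norm{\nabla F(x)}^2$ from Assumption~\ref{ass:ass_2}, the recursion should take the form
\begin{align*}
\E\norm{\bar x_{t+1}-x_*}^2 \le (1-\mu\gamma)\E\norm{\bar x_t-x_*}^2 - \gamma\,\E[F(\bar x_t)-F(x_*)] + \frac{\gamma^2\sigma^2}{M} + (\text{consensus term}),
\end{align*}
valid once $\gamma\le \tfrac{1}{6L}$ and $\gamma\le\tfrac{\mu}{3\rho L^2}$ (the latter is exactly what kills the $\rho\gamma^2\norm{\nabla F(\bar x_t)}^2$ contribution against the $\gamma(F(\bar x_t)-F(x_*))$ term, since $\norm{\nabla F(\bar x_t)}^2\le 2L(F(\bar x_t)-F(x_*))$).

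The heart of the argument is the consensus term. I would show that $\E\norm{x_t^m-\bar x_t}^2$ accumulated over a communication block of length $H$ is governed by $\gamma^2$ times noise plus $\gamma^2$ times \emph{gradient-difference} terms $\norm{\nabla F(x_t^m)-\nabla F(\bar x_t)}^2$. Here the quadratic part $Q$ contributes \emph{nothing} to the drift between local iterates within a block (its gradient is affine by Corollary~\ref{cor:linearity}a, so the $Q$-component of every local step is an identical affine map and the averaging is ``free''), and only $R$ contributes, with $\norm{\nabla R(x_t^m)-\nabla R(\bar x_t)}^2 \le L_R^2\norm{x_t^m-\bar x_t}^2$. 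This is where the factor $\varepsilon = L_R/L$ enters: the consensus error scales like $\gamma^2 H\,(\sigma^2 + \rho\norm{\nabla F}^2) + \gamma^2 L_R^2 H\cdot(\text{consensus})$, so for $\gamma$ small enough (here $\gamma^2 \le \tfrac{\mu}{6CH\rho L^2}$ with $C=\mu+L_R$ handles the $\rho$-coupling and the $L_R$-coupling simultaneously) one gets a closed bound $\E\norm{x_t^m-\bar x_t}^2 = \O(\gamma^2 H\sigma^2 + \gamma^2 H\rho L\,\E[F-F_*] + \gamma^2 H L_R\,\E[F-F_*]/\ldots)$. Feeding this back, the consensus contribution to the recursion becomes $\gamma\cdot L_R\cdot\gamma^2 H\sigma^2 = \gamma^3 \varepsilon L H\sigma^2$ for the $R$-smoothness-weighted piece and $\gamma^3\rho L^2 H\sigma^2/M$ for the strong-growth piece, matching the two communication-dependent terms in the statement.

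The final step is unrolling the recursion $e_{t+1}\le(1-\mu\gamma)e_t - \gamma\delta_t + A\gamma^2 + B\gamma^3$ with the \emph{constant} stepsize $\gamma$ from the theorem, using a weighted telescoping (weights $w_t\propto(1-\mu\gamma)^{-t}$, so $\tilde x_T$ is the corresponding weighted average of the $\bar x_t$) and $\sum w_t\delta_t \ge (\text{const})\cdot w_T(F(\tilde x_T)-F(x_*))$ by convexity. This yields $\E[F(\tilde x_T)-F(x_*)] = \O\big((1-\mu\gamma)^T e_0/(\gamma T) + A\gamma/\mu + B\gamma^2/\mu\big)$ up to the $1/M$ bookkeeping; substituting the explicit minimum defining $\gamma$ and balancing the logarithmic stepsize term against the rest (as in Stich-type analyses) produces the stated $\exp(-\mu T\min\{\ldots\})$ transient plus $\tfrac{\sigma^2}{\mu MT} + \tfrac{\rho L^2 H\sigma^2}{M\mu^3 T^3} + \tfrac{\varepsilon L H\sigma^2}{\mu^2 T^2}$. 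I expect the main obstacle to be the consensus-error estimate: one must carefully track that the strong-growth term $\rho\norm{\nabla F(x_t^m)}^2$ appearing inside the block recursion can be traded against $\rho L(F(x_t^m)-F(x_*))$ and then against the global $\gamma\,\E[F(\bar x_t)-F(x_*)]$ budget without the constants blowing up, which is precisely why the three-way minimum (involving $\mu/\rho L^2$, $1/L$, and $\sqrt{\mu/(CH\rho L^2)}$) shows up in both $\gamma$ and the exponent.
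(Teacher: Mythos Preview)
Your high-level architecture (descent recursion for $\E\norm{\bar x_{t+1}-x_*}^2$, a consensus bound on $V_t=\frac1M\sum_m\norm{x_t^m-\bar x_t}^2$, weighted telescoping with $w_t\propto(1-c\gamma\mu)^{-t}$, then stepsize tuning) matches the paper. Two points, however, are off and the second one is a real gap.

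\textbf{Where $\varepsilon$ enters.} In the paper the factor $L_R$ does \emph{not} come from the consensus bound. The consensus estimate (Lemma~\ref{lem:lemma_Vt}(d)) uses only convexity of $F$ and yields $\E[V_{t+1}]\le(1-\tfrac{\gamma\mu}{2})V_t+\gamma^2\sigma^2+\gamma^2\rho L^2\norm{r_t}^2$; no $Q/R$ split appears there, and $Q$ certainly acts on $x_t^m-\bar x_t$ (its gradient is linear, so $\nabla Q(x_t^m)-\nabla Q(\bar x_t)=A(x_t^m-\bar x_t)\neq 0$). Quadraticity is exploited in the \emph{descent lemma}: because $\frac1M\sum_m\nabla Q(x_t^m)=\nabla Q(\bar x_t)$, only $R$ contributes to the gap between $\bar g_t$ and $\nabla F(\bar x_t)$, which is why the coefficient in front of $V_t$ is $2\gamma L_R$ rather than $2\gamma L$ (Lemma~\ref{lem:AB-lemma}). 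So your statement ``$Q$ contributes nothing to the drift between local iterates'' mislocates the mechanism, even though you do land on the correct product $\gamma L_R\cdot\gamma^2H\sigma^2$.

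\textbf{How the $\rho$-terms are absorbed.} Your explanation that $\gamma\le\mu/(3\rho L^2)$ is what trades $\rho\gamma^2\norm{\nabla F(\bar x_t)}^2$ against $\gamma(F(\bar x_t)-F_*)$ via $\norm{\nabla F}^2\le 2L(F-F_*)$ would only give $\gamma\lesssim 1/(\rho L)$, not $\mu/(\rho L^2)$. In the paper that constraint arises differently: the variance bound (Lemma~\ref{lem:rho_2}) produces $\gamma^2\rho L^2\norm{r_t}^2/M$, which is absorbed into $(1-\gamma\mu)\norm{r_t}^2$, and the same constraint makes the $V_t$ recursion contract. More importantly, unrolling the consensus bound yields terms $\rho\gamma^2L^2\E\norm{r_i}^2$ for all past $i$ in the current block. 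These cannot be absorbed into a single $-\gamma(F(\bar x_t)-F_*)$ slot: they sit at different time indices and at \emph{local} points $x_i^m$, not at $\bar x_t$. The paper closes this by summing block-by-block with weights $w_t=(1-\gamma\mu/2)^{-(t+1)}$ and checking algebraically that the net coefficient of every $\E\norm{r_p}^2$ is nonpositive; solving that quadratic inequality in $\gamma$ is exactly what produces the third constraint $\gamma\le\sqrt{\mu/(6CH\rho L^2)}$ with $C=\mu+L_R$ (the $L_R$ comes from the $2\gamma L_R$ in the descent lemma, the $\mu$ from the extra $\gamma^2\rho L^2/M$ in the variance bound). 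Your plan to ``trade $\rho\norm{\nabla F(x_t^m)}^2$ for $\rho L(F(x_t^m)-F_*)$ and then for the global $\gamma\,\E[F(\bar x_t)-F_*]$ budget'' never says how the index mismatch $i\mapsto t$ and the local--average mismatch $x_t^m\mapsto\bar x_t$ are handled; without a device equivalent to the paper's coefficient check, the recursion does not close and the origin of $\sqrt{\mu/(CH\rho L^2)}$ is left unexplained.
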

\begin{remark}
    Interpretation of Theorem \ref{th:mu>0,rho>0} is also correct. Indeed, according to the bound above, $\nicefrac{H}{T^3}$ can be bounded as $\nicefrac{1}{T^2}$ for the third term, what leads us the independence from $H$ with the same rate, as in existing results \cite{spiridonoff2021communication}. 
\end{remark}
\begin{remark}
    This result outperforms the existing one \cite{spiridonoff2021communication}. In the absence of stochasticity, we have exponential decreasing instead of $\mathcal{O}\left(\nicefrac{1}{T^2}\right)$, and the other summands are better in terms of multiplicative factors, in particular $\nicefrac{L}{\mu}$.
\end{remark}

\section{Discussion} \label{sec:discussion}
In our paper, we introduce such a notion as \textit{approximate quadraticity} through the assumption of $\varepsilon$-decomposition (see \cref{st:st_1}). It allows us to characterize the similarity of the optimized function to a quadratic one. Naturally, several questions arise:
is \cref{st:st_1} comparable to reality? How does the parameter $\varepsilon$ reflect the near-quadratic behavior of the target functions? To understand this, let us consider expressing $\E[\|\bar{x}_{t+1} - x_*\|^2]$ in terms of $\E[\|\bar{x}_{t} - x_*\|^2]$ instead of solving a recurrent relation and analyzing $\E[\|\bar{x}_T - x_*\|^2]$ or $\E[F(\bar{x}_T) - F(x_*)]$. As can be observed from Theorem~\ref{th:th_1} (see Appendix \ref{eq:interesting}), we have:
\begin{align}
    \E \|\bar{x}_{t+1}-x_*\|^2
    &\leq
    (1 - \gamma \lambda) \E \|\bar{x}_t - x_*\|^2 
    + \frac{\gamma^2 \sigma^2}{M}
    + 2 \varepsilon L (H-1) \gamma^3 \sigma^2. \label{eq:recurrent}
\end{align}
In fact, the parameter $\varepsilon$ can vary from the current point of the iterative process \cref{alg:localsgd}, or more precisely:
$$\varepsilon \sim \frac{1}{M}\sum\limits_{m=1}^M \varepsilon(x_t^m).$$
Therefore, considering $\varepsilon$ as a function of some set of points $\mathbb{S} \subset \mathbb{R}^d$, we can observe that for functions with a Lipschitz Hessian, we know that the decay rate of $\varepsilon$ is high. The following graphs illustrate the decay rate of $\varepsilon(x)$ (as a maximum $\varepsilon(y)$ for all $\norm{y} \leq \norm{x}$)  for some functions:
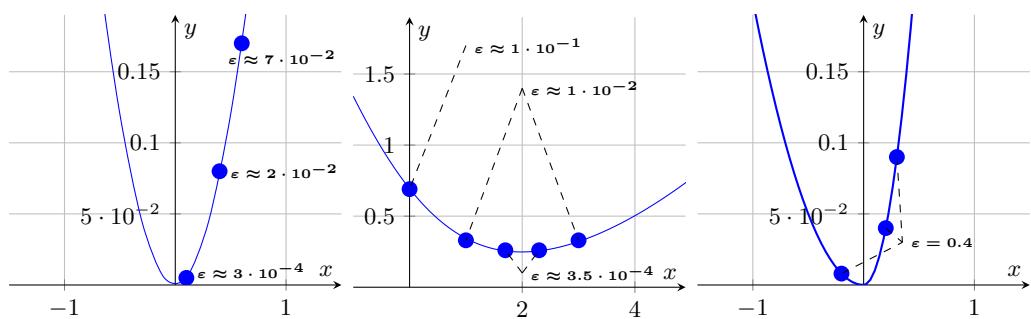
\begin{figure}[H]
\centering
\begin{subfigure}{0.25\textwidth}
\begin{tikzpicture}
\begin{axis}[
    width=6cm,
    xlabel={$x$},
    ylabel={$y$},
    xmin=-1.5, xmax=1.5,
    ymin=0, ymax=0.19, 
    axis lines=center,
    grid=both,
    domain=-10:10,
    samples=100,
]

\addplot[blue, smooth] {ln(cosh(x))};

\coordinate (A1) at (axis cs:0.6,0.17);
\coordinate (B1) at (axis cs:0.4,0.16);

\fill[blue, thick] (A1) circle (3pt);
\node[right] at (B1) {\tiny $\bm{\varepsilon \approx 7 \cdot 10^{-2}}$};

\coordinate (A2) at (axis cs:0.4,0.08);
\coordinate (B2) at (axis cs:0.42,0.08);

\fill[blue, thick] (A2) circle (3pt);
\node[right] at (B2) {\tiny $\bm{\varepsilon \approx 2 \cdot 10^{-2}}$};



\coordinate (A4) at (axis cs:0.1,0.005);
\coordinate (B4) at (axis cs:0.12,0.01);

\fill[blue, thick] (A4) circle (3pt);
\node[right] at (B4) {\tiny $\bm{\varepsilon \approx 3 \cdot 10^{-4}}$};


\end{axis}
\end{tikzpicture}
\caption{\scriptsize LogCoshLoss: $y = \ln(\cosh(x))$ }
\end{subfigure}
\hfill
\begin{subfigure}{0.25\textwidth}
\begin{tikzpicture}
\begin{axis}[
    width=6cm,
    xlabel={$x$},
    ylabel={$y$},
    xmin=-1, xmax=4.9,
    ymin=-0, ymax=1.9,
    axis lines=center,
    grid=both,
    samples=100
]
\addplot[blue, domain=-1:5] {-ln(1/(1 + exp(-x))) + x^2/33};

\coordinate (A1) at (axis cs:1, 0.33);
\coordinate (B1) at (axis cs:2, 1.4);
\coordinate (C1) at (axis cs:3,0.33);

\fill[blue, thick] (A1) circle (3pt);
\fill[blue, thick] (C1) circle (3pt);
\node[right] at (B1) {\tiny $\bm{\varepsilon \approx 1 \cdot 10^{-2}}$};

\draw[dashed] (B1) -- (A1);
\draw[dashed] (B1) -- (C1);
\coordinate (A2) at (axis cs:1.7, 0.26);
\coordinate (B2) at (axis cs:2, 0.1);
\coordinate (C2) at (axis cs:2.3,  0.26);

\fill[blue, thick] (A2) circle (3pt);
\fill[blue, thick] (C2) circle (3pt);
\node[right] at (B2) {\tiny $ \bm{\varepsilon \approx 3.5 \cdot 10^{-4}}$};

\draw[dashed] (B2) -- (A2);
\draw[dashed] (B2) -- (C2);
 \coordinate (A2) at (axis cs:0, 0.69);
\coordinate (B2) at (axis cs:1, 1.7);

\fill[blue, thick] (A2) circle (3pt);
\node[right] at (B2) {\tiny $\bm{\varepsilon \approx 1 \cdot 10^{-1}}$};

\draw[dashed] (B2) -- (A2);
\end{axis}
\end{tikzpicture}
\caption{\scriptsize LogLoss with $\ell_2$ reg.  ~\eqref{eq:logloss}}
\end{subfigure}
\hfill
\begin{subfigure}{0.25\textwidth}
\begin{tikzpicture}[
  declare function={
    func(\x)= (\x < 0) * (x*x/5)   +
              (\x >= 0) * (x*x)
   ;
  }
]
\begin{axis}[
    width=6cm,
    xlabel={$x$},
    ylabel={$y$},
    xmin=-1.5, xmax=1.5,
    ymin=0, ymax=0.19, 
    axis lines=center,
    grid=both,
    domain=-2:2,
    samples=100,
]

\coordinate (A1) at (axis cs:-0.2,0.008);
\coordinate (A2) at (axis cs:0.2,0.04);
\coordinate (A3) at (axis cs:0.3,0.09);

\coordinate (B1) at (axis cs:0.35,0.03);

\fill[blue, thick] (A1) circle (3pt);
\fill[blue, thick] (A2) circle (3pt);
\fill[blue, thick] (A3) circle (3pt);

\node[right] at (B1) {\tiny $\bm{\varepsilon = 0.4}$};

\draw[dashed] (A1) -- (B1);
\draw[dashed] (A2) -- (B1);
\draw[dashed] (A3) -- (B1);

\addplot [blue,thick] {func(x)};
\end{axis}
\end{tikzpicture}
\caption{\scriptsize Lower-bound function $\mathcal{F} $ ~\eqref{eq:piecewise}}
\end{subfigure}

\caption{Decay of $\varepsilon$ when getting closer to minima}
\label{graph:eps_graphs}
\end{figure}
In the graph (b), by LogLoss with $\ell_2$ regularization, we mean 
\begin{equation} \label{eq:logloss}
    y = -\ln\left(\frac{1}{1 + e^{-x}} \right) + 0.03 x^2.
\end{equation}

And in the graph (c), we analyze a well-known function that yields lower bound estimates for Local SGD \cite{glasgow2022sharp}:
\begin{equation} \label{eq:piecewise}
    \mathcal{F}(x) = \begin{cases} 
      x^2 / 5 & \text{if } x < 0; \\
      x^2 & \text{if } x \geq 0. \\
   \end{cases}
\end{equation}

Here, we notice that for functions (a) and (b), $\varepsilon$ decreases rapidly (because they satisfy the Lipschitz Hessian assumption). Due to the decay of $\varepsilon$ we can increase number of local steps $H$ or reduce number of communcations $R$, which means enhancing communcation complexity.


However, for function (c), $\varepsilon$ remains constant in any neighborhood of its optimum. This may provide additional comprehension into why $\mathcal{F}$ yields lower bounds for Local SGD.


From this example, we can observe that~\eqref{eq:recurrent} provides valuable insights into the convergence rate for different types of functions. However, abandoning the assumption of a Lipschitz Hessian means that we cannot estimate the decay rate of $\varepsilon$. Therefore, conducting a meaningful asymptotic analysis while maintaining generality seems impossible.

Nevertheless, it remains an open question for future research, and more specifically, the construction of a theory that takes into account the dependence of the parameter $\varepsilon$ on the current point $x$.




\subsection*{Acknowledgements}

The work of A. Gasnikov was supported by a grant for research centers in the field of artificial intelligence, provided by the Analytical Center for the Government of the Russian Federation in accordance with the subsidy agreement (agreement identifier 000000D730321P5Q0002) and the agreement with the Ivannikov Institute for System Programming of the Russian Academy of Sciences dated November 2, 2021 no. 70-2021-00142.

\bibliographystyle{splncs04}
\bibliography{bib.bib}

\newpage
\appendix \label{sec:proofs}
\section{Notation}
To begin with, let us introduce useful contractions. 
By the capital Latin letters \( F, \ Q, \ R \) we denote functions. 
Corresponding stochastic gradients are represented by the bold Gothic lowercase Latin letters \( \mc{g}, \ \mc{q}, \ \mc{r} \), and the straight lowercase Latin letters \( g, \ q, \ r \) denote the expectations of the gradients. 
A bar above the letter (i.e., \( \bar{g} \)) indicates that we take the average of this value among all devices.

We summarize these notation and other objects, which we use in the proofs for shortness in the following table.

\renewcommand{\arraystretch}{2.5} 
\begin{table}[H]
\centering
\begin{tabular}{|c|p{10cm}|}
\hline
\textbf{Notation} & \textbf{Meaning} \\
\hline




\(\bar{x}_t\) & \( \fsumm{m=1}{M} x^m_t \) - average of the weights among all devices at iteration $t$ \\


\( \bar{F}_t, \ \bar{Q}_t, \ \bar{R}_t \) & \( \fsumm{m=1}{M} F(x^m_t),\ \fsumm{m=1}{M} Q(x^m_t),\ \fsumm{m=1}{M} R(x^m_t) \) -- average of function values \\

\( \mc{g}^m_t, \ \mc{q}^m_t, \ \mc{r}^m_t \) & \( \nabla F(x^m_t, z^m_t),\ \nabla Q(x^m_t, z^m_t),\ \nabla R(x^m_t, z^m_t) \) -- corresponding stochastic gradients at iteration $t$ on device $m$ \\

\( \bar{\mc{g}}_t, \ \bar{\mc{q}}_t, \ \bar{\mc{r}}_t \) & \( \fsumm{m=1}{M} \mc{g}^m_t,\ \fsumm{m=1}{M} \mc{q}^m_t,\ \fsumm{m=1}{M} \mc{r}^m_t \) -- average of stochastic gradients at iteration $t$ \\

\( g^m_t, \ q^m_t, \ r^m_t \) & \( \nabla F(x^m_t),\ \nabla Q(x^m_t),\ \nabla R(x^m_t) \)  -- expected value of stochastic gradients at iteration $t$ on device $m$ (namely, 
\( \E[\mc{g}^m_t], \ \E[\mc{q}^m_t], \ \E[\mc{r}^m_t] \)) \\

\( \bar{{g}}_t, \ \bar{{q}}_t, \ \bar{{r}}_t \) & \( \fsumm{m=1}{M} {g}^m_t,\ \fsumm{m=1}{M} {q}^m_t,\ \fsumm{m=1}{M} {r}^m_t \) -- average of expected values of gradients at iteration $t$ \\


\( V_t \) & \( \fsumm{m=1}{M} \norm{x^m_t-\bar{x}_t}^2 \) -- mean deviation of $x^m_t$ from $\bar{x}_t$\\

\( \norm{r_t} \) & \( \norm{\bar{x_t} - x_*} \) -- distance between average weights and optimum at iteration~$t$ \\

\( \kappa \) & \( \frac{L}{\mu} \) -- condition number \\

\hline

\end{tabular}
\caption{Notation summary}
\label{tab:notation}
\end{table}
\section{Basic facts}
\begin{lemma}[Young's inequality] For all $u,v\in \mathbb{R}^d$ and any positive $w$ the Young's inequality holds:
\begin{align}
    2\left\langle u, v \right\rangle \leq  w \| u\|^2 + \frac{1}{w}\| v \|^2. \label{ap:Cauchy–Schwarz}
\end{align}
\end{lemma}

\begin{lemma}[Jensen's inequality] For any convex function $g: \mathbb{R}^d \to \mathbb{R}$, all $\{u_i \}_{i=1}^n \in \mathbb{R}^d$ and any non-negative $\{p_i\}_{i=1}^n \in \mathbb{R}$ such that $\sum_{i=1}^n p_i = 1$ the Jensen's inequality holds:
\begin{align}
    g\left( \sum_{i=1}^n p_i u_i\right) \leq \sum_{i=1}^n p_i g(u_i). \label{ap:Jensen}
\end{align}
\end{lemma}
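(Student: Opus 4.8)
The plan is to prove the inequality by induction on $n$, using only the two-point definition of convexity, namely that $g(\lambda x + (1-\lambda)y) \le \lambda g(x) + (1-\lambda)g(y)$ for all $x,y \in \mathbb{R}^d$ and $\lambda \in [0,1]$. For the base case $n=1$, the constraint $\sum_{i=1}^n p_i = 1$ forces $p_1 = 1$, so both sides of \eqref{ap:Jensen} equal $g(u_1)$ and the claim is trivial.

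For the inductive step I would assume the statement for $n-1$ points and treat the degenerate case first: if some weight equals $1$ (hence all others vanish), both sides collapse to a single $g(u_i)$. Otherwise every $p_i < 1$; in particular $1 - p_n > 0$, so I may define normalized weights $q_i = p_i/(1-p_n)$ for $i = 1,\ldots,n-1$, which are non-negative and satisfy $\sum_{i=1}^{n-1} q_i = 1$. Writing $w = \sum_{i=1}^{n-1} q_i u_i$, I decompose $\sum_{i=1}^n p_i u_i = (1-p_n)\,w + p_n u_n$ and apply two-point convexity with $\lambda = p_n$:
\[
g\!\left(\sum_{i=1}^n p_i u_i\right) \le (1-p_n)\, g(w) + p_n\, g(u_n).
\]
The inductive hypothesis applied to the $n-1$ points $\{u_i\}_{i=1}^{n-1}$ with weights $\{q_i\}_{i=1}^{n-1}$ gives $g(w) \le \sum_{i=1}^{n-1} q_i\, g(u_i)$. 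Substituting this bound and using the identity $(1-p_n)\,q_i = p_i$ collapses the right-hand side to $\sum_{i=1}^n p_i\, g(u_i)$, which closes the induction.

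This argument has essentially no hard part; the only point demanding care is the renormalization bookkeeping, that is, verifying $1 - p_n > 0$ before dividing and separately disposing of the case where a single weight equals $1$. I would also mention an alternative one-line proof that avoids induction: since $g$ is convex on $\mathbb{R}^d$ it admits a subgradient $v$ at the barycenter $\bar u = \sum_{i=1}^n p_i u_i$, so $g(u_i) \ge g(\bar u) + \langle v,\, u_i - \bar u\rangle$ for each $i$; taking the $p_i$-weighted sum and noting that $\sum_{i=1}^n p_i(u_i - \bar u) = 0$ yields \eqref{ap:Jensen} at once. I would nonetheless favor the inductive version, since it relies only on the two-point definition of convexity and requires no subdifferential theory.
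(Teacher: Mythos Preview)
Your proof is correct; the induction on $n$ using the two-point definition of convexity is the standard textbook argument, and the degenerate-case handling and renormalization are done carefully. The paper itself states this lemma as a basic fact without proof, so there is nothing to compare against; your write-up simply supplies what the paper omits.
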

\section{Descent lemmas}

Before demonstrating the proof of the claimed facts, let us first establish some technical results.

\begin{lemma} \label{lem:lem_1}
For the averaged variables $\bar x_t$ obtained by the iterates of Algorithm \ref{alg:localsgd} for the problem \eqref{problem_main} under Assumption \ref{st:st_1} it holds that
    \begin{align*}
        \norm{\bar{x}_t - x_* - \gamma_{t} \bar{g}_t}^2
        =& \norm{\bar{x}_t - x_*}^2 
        + \gamma_{t}^2 \norm{\nabla Q(\bar x_t) 
        + \bar{r}_t - \nabla Q(x_*) - \nabla R(x_*)}^2 
        \\
        &- 2 \gamma_{t} \inner{\bar{x}_t - x_*}{\nabla Q(\bar x_t)}
        - 2 \gamma_{t} \inner{\bar{x}_t - x_*}{\bar{r}_t}.
    \end{align*}
\end{lemma}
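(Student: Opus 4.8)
The plan is to expand the squared norm directly using the bilinear identity $\norm{a-b}^2 = \norm{a}^2 - 2\inner{a}{b} + \norm{b}^2$, treating $a = \bar x_t - x_*$ and $b = \gamma_t \bar g_t$. The only nontrivial input is to rewrite the averaged expected gradient $\bar g_t$ in a form that exposes both the quadratic part at the \emph{averaged} point $\bar x_t$ and the remainder part. So first I would recall from the notation table that $\bar g_t = \frac1M\sum_{m=1}^M g_t^m = \frac1M\sum_{m=1}^M \nabla F(x_t^m) = \bar q_t + \bar r_t$, where $\bar q_t = \frac1M\sum_m \nabla Q(x_t^m)$ and $\bar r_t = \frac1M\sum_m \nabla R(x_t^m)$.

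The key step is to use \cref{cor:linearity}(a): since $\nabla Q$ is an affine (linear) map, averaging commutes with it, i.e. $\bar q_t = \frac1M\sum_m \nabla Q(x_t^m) = \nabla Q\!\left(\frac1M\sum_m x_t^m\right) = \nabla Q(\bar x_t)$. Hence $\bar g_t = \nabla Q(\bar x_t) + \bar r_t$. This is precisely the point where the approximate-quadraticity structure enters: without linearity of $\nabla Q$ one could not collapse the per-device quadratic gradients into a single evaluation at $\bar x_t$, and the deviation term $V_t$ would appear.

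Then I would also use that $x_*$ is the minimizer of $F = Q+R$, so $0 = \nabla F(x_*) = \nabla Q(x_*) + \nabla R(x_*)$; this lets me write $\bar g_t = \big(\nabla Q(\bar x_t) + \bar r_t\big) - \big(\nabla Q(x_*) + \nabla R(x_*)\big)$ inside the squared-norm term, matching the statement's right-hand side. Plugging $b = \gamma_t \bar g_t$ with this representation into the expansion gives
\[
\norm{\bar x_t - x_* - \gamma_t \bar g_t}^2 = \norm{\bar x_t - x_*}^2 + \gamma_t^2\norm{\bar g_t}^2 - 2\gamma_t \inner{\bar x_t - x_*}{\bar g_t},
\]
and then I substitute $\bar g_t = \nabla Q(\bar x_t) + \bar r_t$ into the last inner product, splitting it into $-2\gamma_t\inner{\bar x_t - x_*}{\nabla Q(\bar x_t)} - 2\gamma_t\inner{\bar x_t - x_*}{\bar r_t}$, while keeping $\gamma_t^2\norm{\bar g_t}^2$ in the form $\gamma_t^2\norm{\nabla Q(\bar x_t) + \bar r_t - \nabla Q(x_*) - \nabla R(x_*)}^2$ using the $\nabla F(x_*)=0$ rewriting. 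This reproduces the claimed identity exactly.

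There is essentially no obstacle here — it is an algebraic identity — but the one thing to be careful about is justifying $\bar q_t = \nabla Q(\bar x_t)$ rigorously: \cref{cor:linearity}(a) states $\nabla Q$ is linear, and a linear (affine-linear) map $A x + b$ satisfies $\frac1M\sum_m (A x_t^m + b) = A\bar x_t + b$, so the identity holds; I would state this explicitly as the crux of the lemma. The rest is bookkeeping with the inner-product expansion and the first-order optimality condition for $x_*$.
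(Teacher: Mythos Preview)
Your proposal is correct and matches the paper's proof essentially step for step: expand the square, insert $\nabla F(x_*)=0$ inside the quadratic term, split $\bar g_t$ into its $Q$- and $R$-parts, and use the linearity of $\nabla Q$ from \cref{cor:linearity}(a) to collapse $\bar q_t$ to $\nabla Q(\bar x_t)$. There is nothing to add.
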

\begin{proof}
To prove this lemma it is enough to use simple algebraic transformations and the notation introduced in Table \ref{tab:notation} for the functions $Q$ and $R$ from Assumption \ref{st:st_1}.
    \begin{align*}
        \|\bar{x}_t - & x_*  - \gamma_{t} \bar{g}_t \|^2
        \\
        =& 
        \norm{\bar{x}_t - x_*}^2 
        + \gamma_{t}^2 \norm{\bar{g}_t}^2 
        - 2 \gamma_{t} \inner{\bar{x}_t - x_*}{\bar{g}_t} 
        \\
        =& 
        \norm{\bar{x}_t - x_*}^2 
        + \gamma_{t}^2 \norm{\fsumm {m=1}{M}{\nabla F(x^m_t)}}^2 
        - 2\gamma_{t} \inner{\bar{x}_t - x_*}{ \frac{1}{M}\summ {m=1}{M} \nabla F(x^m_t)} \\
        =& 
            \notag
        \norm{\bar{x}_t - x_*}^2 
        + \gamma_{t}^2 \norm{\fsumm {m=1}{M}{\nabla F(x^m_t)} - \nabla F(x_*)}^2
        \\
        &- 2\gamma_{t} \inner{\bar{x}_t - x_*}{\frac{1}{M}\summ {m=1}{M} \nabla F(x^m_t)}\\
        =& 
            \notag
        \norm{\bar{x}_t - x_*}^2 
        + \gamma_{t}^2 \norm{\fsumm {m=1}{M}{ \left[\nabla Q(x^m_t) + \nabla R(x^m_t) - \nabla Q(x_*) - \nabla R(x_*)\right]}}^2  \\
        &- 2\gamma_{t}\inner{\bar{x}_t - x_*}{ \frac{1}{M}\summ {m=1}{M}\nabla Q(x^m_t)}
        - 2\gamma_{t} \inner{\bar{x}_t - x_*}{ \frac{1}{M}\summ {m=1}{M} \nabla R(x^m_t)}  \\
        =&
        \notag
        \norm{\bar{x}_t - x_*}^2 
        + \gamma_{t}^2 \norm{\fsumm {m=1}{M}{ \left[q^m_t + r^m_t - \nabla Q(x_*) - \nabla R(x_*) \right]}}^2  \\
        &- 2\gamma_{t}\inner{\bar{x}_t - x_*}{ \frac{1}{M}\summ {m=1}{M} q^m_t}
        - 2\gamma_{t} \inner{\bar{x}_t - x_*}{ \frac{1}{M}\summ {m=1}{M} r^m_t}
        \\
        =& 
        \norm{\bar{x}_t - x_*}^2 
        + \gamma_{t}^2 \norm{\frac{1}{M}\summ{m=1}{M} {q}^m_t 
        + \bar{r}_t - \nabla Q(x_*) - \nabla R(x_*)}^2 
        \\
        &- 2 \gamma_{t} \inner{\bar{x}_t - x_*}{\frac{1}{M}\summ{m=1}{M}  {q}^m_t} - 2 \gamma_{t} \inner{\bar{x}_t - x_*}{\bar{r}_t}.
    \end{align*}
    It remains to take into account quadraticity of the function $Q$. In particular, we use that $\frac{1}{M}\summ{m=1}{M} {q}^m_t = \nabla Q(\bar x_t)$. \qed
\end{proof}
\noindent Subsequently, we proceed to separately work with the terms of Lemma \ref{lem:lem_1}.
\begin{lemma} \label{lem:g_t}
    For the iterates of Algorithm \ref{alg:localsgd} for the problem \eqref{problem_main} under Assumption \ref{st:st_1} it holds that for any positive $\zeta$
    \begin{align*}
        \|\nabla Q(\bar x_t) 
        + \bar{r}_t - & \nabla Q(x_*) - \nabla R(x_*) \|^2 
        \\
        \leq&\
        2 L_Q(1 + \zeta)  (Q(\bar{x}_t) - Q(x_*) - \inner {\nabla Q(x_*)}{\bar{x}_t - x_*})
        \\
        &+ 2 L_R \left(1 + \frac{1}{\zeta} \right) (\bar{R}_t - R(x_*) - \inner {\nabla R(x_*)}{\bar{x}_t - x_*}).
    \end{align*}
\end{lemma}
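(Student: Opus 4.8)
The statement to prove is an upper bound on $\|\nabla Q(\bar x_t) + \bar r_t - \nabla Q(x_*) - \nabla R(x_*)\|^2$. The natural first move is to split this vector into its "$Q$-part" and its "$R$-part" and apply the Young/Cauchy–Schwarz inequality with parameter $\zeta$: writing $u = \nabla Q(\bar x_t) - \nabla Q(x_*)$ and $v = \bar r_t - \nabla R(x_*)$, we have $\|u+v\|^2 \le (1+\zeta)\|u\|^2 + (1+\tfrac1\zeta)\|v\|^2$. This is exactly the form of \eqref{ap:Cauchy–Schwarz} after expanding $\|u+v\|^2 = \|u\|^2 + \|v\|^2 + 2\langle u,v\rangle$ and applying $2\langle u,v\rangle \le \zeta\|u\|^2 + \tfrac1\zeta\|v\|^2$, so it is a clean appeal to a basic fact already in the paper.

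**Bounding the two pieces.**

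For the $Q$-part: since $Q$ is quadratic, $\nabla Q$ is linear (Corollary~\ref{cor:linearity}a), and $Q$ is $0$-strongly convex and $L_Q$-smooth, so Corollary~\ref{cor:nesterov} applied to $Q$ gives
$\tfrac{1}{2L_Q}\|\nabla Q(\bar x_t) - \nabla Q(x_*)\|^2 \le Q(\bar x_t) - Q(x_*) - \langle \nabla Q(x_*), \bar x_t - x_*\rangle$,
i.e. $\|\nabla Q(\bar x_t) - \nabla Q(x_*)\|^2 \le 2L_Q\,(Q(\bar x_t) - Q(x_*) - \langle\nabla Q(x_*),\bar x_t - x_*\rangle)$. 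Multiplying by $(1+\zeta)$ yields the first term of the claim. For the $R$-part I would first pass from the average of gradients to the average over devices using Jensen's inequality \eqref{ap:Jensen} applied to the convex function $x \mapsto \|\nabla R(x) - \nabla R(x_*)\|^2$: namely $\|\bar r_t - \nabla R(x_*)\|^2 = \|\tfrac1M\sum_m (\nabla R(x^m_t) - \nabla R(x_*))\|^2 \le \tfrac1M\sum_m \|\nabla R(x^m_t) - \nabla R(x_*)\|^2$. Then apply Corollary~\ref{cor:nesterov} to $R$ at each point $x^m_t$ (using that $R$ is $L_R$-smooth and, a fortiori, $0$-strongly convex): each summand is at most $2L_R(R(x^m_t) - R(x_*) - \langle\nabla R(x_*), x^m_t - x_*\rangle)$; averaging over $m$ produces $2L_R(\bar R_t - R(x_*) - \langle\nabla R(x_*), \bar x_t - x_*\rangle)$, where the linearity of the inner product in the averaging variable collapses $\tfrac1M\sum_m\langle\nabla R(x_*), x^m_t - x_*\rangle$ to $\langle\nabla R(x_*), \bar x_t - x_*\rangle$. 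Multiplying by $(1+\tfrac1\zeta)$ gives the second term.

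**The main subtlety.** The only place requiring care is the $R$-part: one must route through Jensen \emph{before} invoking convexity of $R$ itself, because $\bar R_t = \tfrac1M\sum_m R(x^m_t)$ is an average of function values at the \emph{local} iterates $x^m_t$, not $R(\bar x_t)$ — the bound is genuinely in terms of the per-device points, and the gradient-squared convexity step is what makes the device-wise Nesterov inequality applicable. (The $Q$-part has no such issue precisely because $\nabla Q$ linear forces $\tfrac1M\sum_m \nabla Q(x^m_t) = \nabla Q(\bar x_t)$, which is already used in Lemma~\ref{lem:lem_1}.) Everything else is assembling these two estimates via the $\zeta$-weighted split; I do not anticipate any real obstacle beyond bookkeeping the constants $(1+\zeta)$ and $(1+\tfrac1\zeta)$.
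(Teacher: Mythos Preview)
Your proposal is correct and follows essentially the same approach as the paper's proof: split via Young's inequality \eqref{ap:Cauchy–Schwarz} with parameter $\zeta$, apply Corollary~\ref{cor:nesterov} to $Q$ at $\bar x_t$, and for the $R$-part use Jensen on the squared norm to pass to the per-device terms before applying Corollary~\ref{cor:nesterov} at each $x^m_t$. One wording nit: the Jensen step uses convexity of $v \mapsto \|v\|^2$ applied to the vectors $\nabla R(x^m_t) - \nabla R(x_*)$, not convexity of the composite map $x \mapsto \|\nabla R(x) - \nabla R(x_*)\|^2$ (which need not be convex), but the inequality you actually wrote down is exactly the right one.
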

\begin{proof} By the Young's inequality \eqref{ap:Cauchy–Schwarz} for any positive $\zeta$, we have
    \begin{align*}
        \|\nabla Q(\bar x_t) 
        + \bar{r}_t & - \nabla Q(x_*) - \nabla R(x_*) \|^2
        \\
        =&\ \norm{\nabla Q(\bar x_t) - \nabla Q(x_*)}^2 
        + \norm{\bar{r}_t - \nabla R(x_*)}^2
        \\
        &+ 2\inner {\nabla Q(\bar x_t) - \nabla Q(x_*)} {\bar{r}_t - \nabla R(x_*)} 
        \notag \\
        \leq&\ \norm{\nabla Q(\bar x_t) - \nabla Q(x_*)}^2 
        + \norm{\bar{r}_t - \nabla R(x_*)}^2
        \\
        &+ \zeta \norm{\nabla Q(\bar x_t) - \nabla Q(x_*)}^2 
        + \frac{1}{\zeta} \norm{\bar{r}_t - \nabla R(x_*)}^2 \notag \\
        =&\ (1 + \zeta)\norm{\nabla Q(\bar x_t) - \nabla Q(x_*)}^2 + \left(1 + \frac{1}{\zeta} \right)\norm{\bar{r}_t - \nabla R(x_*)}^2
        \notag \\
        =&\ (1 + \zeta)\norm{\nabla Q(\bar x_t) - \nabla Q(x_*)}^2 
        \\
        &+\left(1 + \frac{1}{\zeta} \right)\norm{\frac{1}{M} \sum_{m=1}^M [\nabla R(x^m_t) - \nabla R(x_*)]}^2
        \\
        \leq&\  (1 + \zeta)\norm{\nabla Q(\bar x_t) - \nabla Q(x_*)}^2 
        \\
        &+\left(1 + \frac{1}{\zeta} \right) \cdot \frac{1}{M} \sum_{m=1}^M \norm{ \nabla R(x^m_t) - \nabla R(x_*)}^2.
    \end{align*}
    Here we also used the notation for $\bar r_t$ and convexity of the norm \eqref{ap:Jensen}. With smoothness of $Q$ and $R$ (Assumption \ref{st:st_1} and Corollary~\ref{cor:nesterov}), one can obtain
    \begin{align*}
        \|\nabla Q(\bar x_t) 
        + \bar{r}_t & - \nabla Q(x_*) - \nabla R(x_*) \|^2
        \\
        \leq&\ 2 (1 + \zeta) L_Q ( Q(\bar x_t) - Q (x_*) - \inner {\nabla Q(x_*)}{\bar{x}_t - x_*})
        \\
        &+ 2\left(1 + \frac{1}{\zeta}\right) L_R \cdot \frac{1}{M} \sum_{m=1}^M [R (x^m_t) - R (x_*) - \inner {\nabla R(x_*)}{x^m_t - x_*}].
    \end{align*}
To finish the proof one just need to use the notation of $\bar R_t$. \qed
\end{proof}

\begin{lemma} \label{lem:inner_1}
For the iterates of Algorithm \ref{alg:localsgd} for the problem \eqref{problem_main} under Assumption \ref{st:st_1} it holds that
    \begin{align*}
        -2 \inner{\bar{x}_t - x_*}{\nabla Q(\bar x_t)} &\leq 2 Q (x_*) - 2 Q(\bar{x}_t) - \mu_Q \norm{\bar{x}_t - x_*}^2. 
    \end{align*}
\end{lemma}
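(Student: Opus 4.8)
The plan is to read off the inequality directly from the lower (strong-convexity) half of \cref{def:def_1} applied to the function $Q$, which is $\mu_Q$-strongly convex by \cref{st:st_1}. First I would instantiate the strong-convexity inequality $\frac{\mu_Q}{2}\norm{x-y}^2 \leq Q(y) - Q(x) - \inner{\nabla Q(x)}{y-x}$ at the pair of points $x = \bar{x}_t$ and $y = x_*$, obtaining
\[
\frac{\mu_Q}{2}\norm{\bar{x}_t - x_*}^2 \leq Q(x_*) - Q(\bar{x}_t) - \inner{\nabla Q(\bar{x}_t)}{x_* - \bar{x}_t}.
\]
Then I would multiply through by $2$ and rewrite $-\inner{\nabla Q(\bar{x}_t)}{x_* - \bar{x}_t} = \inner{\nabla Q(\bar{x}_t)}{\bar{x}_t - x_*}$, so that
\[
\mu_Q\norm{\bar{x}_t - x_*}^2 \leq 2Q(x_*) - 2Q(\bar{x}_t) + 2\inner{\bar{x}_t - x_*}{\nabla Q(\bar{x}_t)}.
\]
Finally, moving the last term to the left and the $\mu_Q$-term to the right yields exactly the claimed bound $-2\inner{\bar{x}_t - x_*}{\nabla Q(\bar{x}_t)} \leq 2Q(x_*) - 2Q(\bar{x}_t) - \mu_Q\norm{\bar{x}_t - x_*}^2$.

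There is essentially no obstacle here: the statement is the definition of $\mu_Q$-strong convexity written with a specific choice of base point and target point, and the only thing to be careful about is the sign bookkeeping when swapping $x_* - \bar{x}_t$ for $\bar{x}_t - x_*$ inside the inner product. Note that the quadraticity of $Q$ (\cref{cor:linearity}a) is not needed for this particular lemma — only strong convexity is used — although it will matter elsewhere when combining this with \cref{lem:lem_1} and \cref{lem:g_t}.
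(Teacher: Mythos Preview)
Your proposal is correct and is exactly the paper's approach: the paper's entire proof is the one-line remark that it suffices to apply the $\mu_Q$-strong convexity of $Q$, which is precisely the instantiation and rearrangement you wrote out.
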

\begin{proof}
    It is enough to apply the $\mu_Q$-convexity of $Q$. \qed
\end{proof}

\begin{lemma} \label{lem:inner_2}
For the iterates of Algorithm \ref{alg:localsgd} for the problem \eqref{problem_main} under Assumption \ref{st:st_1} it holds that 
\begin{align*}
    - 2 \inner{\bar{x}_t - x_*}{\bar{r}_t} 
    \leq& 
    - (\bar{R}_t - R (x_*))  + 2 L_R V_t - \mu_R \norm{x_* - \bar{x}_t}^2 
    \\
    &- \inner{\nabla R(x_*)}{\bar{x}_t - x_*}.
\end{align*}
\end{lemma}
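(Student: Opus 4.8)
The plan is to expand the inner product $-2\inner{\bar x_t - x_*}{\bar r_t}$ using the averaging notation and then bound each piece via the structural properties of $R$ promised by Assumption~\ref{st:st_1}. First I would write $\bar r_t = \frac{1}{M}\sum_{m=1}^M \nabla R(x^m_t)$ and split the inner product into a term that compares each $\bar x_t$ to the corresponding local iterate $x^m_t$ and a term that uses the local iterates directly, i.e. decompose $\bar x_t - x_* = (\bar x_t - x^m_t) + (x^m_t - x_*)$ inside the sum. The second group, $-\frac{2}{M}\sum_m \inner{x^m_t - x_*}{\nabla R(x^m_t)}$, is exactly the setting in which $\mu_R$-strong convexity of $R$ applies: by the same argument as in Lemma~\ref{lem:inner_1}, for each $m$ we get $-2\inner{x^m_t - x_*}{\nabla R(x^m_t)} \le 2R(x_*) - 2R(x^m_t) - \mu_R\norm{x^m_t - x_*}^2$, but here I want to keep a Bregman-type remainder, so I would instead use the inequality in the form $-2\inner{x^m_t-x_*}{\nabla R(x^m_t)} \le -2\left(R(x^m_t) - R(x_*) - \inner{\nabla R(x_*)}{x^m_t-x_*}\right) - \inner{\nabla R(x_*)}{x^m_t-x_*} + \ldots$; more cleanly, strong convexity gives $R(x_*) \ge R(x^m_t) + \inner{\nabla R(x^m_t)}{x_* - x^m_t} + \frac{\mu_R}{2}\norm{x_*-x^m_t}^2$, which rearranges to exactly the bound I need after averaging and after re-expressing $R(x^m_t)$ relative to $R(x_*)$. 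Averaging over $m$ turns $\frac{1}{M}\sum_m R(x^m_t)$ into $\bar R_t$ and, using Jensen / the definition of $\norm{x_* - \bar x_t}$ together with $\frac1M\sum_m\norm{x_*-x_t^m}^2 \ge \norm{x_* - \bar x_t}^2$, produces the $-\mu_R\norm{x_*-\bar x_t}^2$ term (the stated bound has coefficient $\mu_R$, not $\mu_R/2$, so I would combine the strong-convexity remainder from both the function-gap and the distance term, which is the standard way the full $\mu_R$ appears).

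Next I would handle the cross term $-\frac{2}{M}\sum_m \inner{\bar x_t - x^m_t}{\nabla R(x^m_t)}$. Rewriting $\nabla R(x^m_t) = \nabla R(x^m_t) - \nabla R(x_*) + \nabla R(x_*)$ and noting that $\frac1M\sum_m(\bar x_t - x^m_t) = 0$, the $\nabla R(x_*)$ part vanishes, leaving $-\frac{2}{M}\sum_m\inner{\bar x_t - x^m_t}{\nabla R(x^m_t) - \nabla R(x_*)}$. I would bound this by Young's inequality \eqref{ap:Cauchy–Schwarz} with a carefully chosen weight: $-2\inner{\bar x_t - x^m_t}{\nabla R(x^m_t)-\nabla R(x_*)} \le L_R\norm{\bar x_t - x^m_t}^2 + \frac{1}{L_R}\norm{\nabla R(x^m_t) - \nabla R(x_*)}^2$, then invoke $L_R$-smoothness of $R$ (Corollary~\ref{cor:nesterov}) to convert $\frac{1}{L_R}\norm{\nabla R(x^m_t)-\nabla R(x_*)}^2 \le 2\left(R(x^m_t) - R(x_*) - \inner{\nabla R(x_*)}{x^m_t - x_*}\right)$. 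Averaging, the first term contributes $L_R V_t$ and the second contributes a Bregman-divergence term in $\bar R_t$ — I would need to check whether Young's weight should be $L_R$ or $2L_R$ so that, after adding this Bregman term to the negative $-2(\bar R_t - R(x_*) - \ldots)$ coming from the strong-convexity step, the net function-gap coefficient collapses to exactly $-(\bar R_t - R(x_*))$ and the total deviation coefficient becomes $2L_R V_t$ as claimed (a factor-of-two adjustment in the Young weight is the natural knob here).

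The final step is bookkeeping: collect the $-\inner{\nabla R(x_*)}{\bar x_t - x_*}$ term (which survives because the remaining $\inner{\nabla R(x_*)}{x^m_t - x_*}$ pieces average to $\inner{\nabla R(x_*)}{\bar x_t - x_*}$), the $-\mu_R\norm{x_* - \bar x_t}^2$ term, the $2L_R V_t$ term, and verify the function-gap terms telescope to $-(\bar R_t - R(x_*))$. The main obstacle I anticipate is getting the constants to line up exactly as stated — in particular making the strong-convexity remainder and the smoothness-based Young bound combine so that the Bregman divergences partially cancel leaving a clean $-(\bar R_t - R(x_*))$ rather than a fractional multiple, and ensuring $V_t$'s coefficient is exactly $2L_R$ and not, say, $L_R$ or $3L_R$; this is purely a matter of choosing the Young weight and tracking which copies of $R(x^m_t) - R(x_*) - \inner{\nabla R(x_*)}{x^m_t - x_*}$ are positive versus negative, but it is the place where a sign or factor slip would break the statement.
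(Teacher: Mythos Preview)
Your proposal is correct and mirrors the paper's proof almost exactly: the paper also splits $\bar x_t - x_*$ through $x^m_t$, uses $\mu_R$-strong convexity on the $\inner{x^m_t-x_*}{\nabla R(x^m_t)}$ part (the factor $2$ in front is what turns $\mu_R/2$ into $\mu_R$, so no extra combination is needed), subtracts $\nabla R(x_*)$ in the cross term so the constant piece averages to zero, and then applies Young together with Corollary~\ref{cor:nesterov}. The only thing you left open is the Young weight, and the paper's choice is precisely your second candidate $w=2L_R$, which makes the Bregman contribution from the cross term equal to $+\,(\bar R_t - R(x_*) - \inner{\nabla R(x_*)}{\bar x_t - x_*})$ and the deviation term equal to $2L_R V_t$, so the constants line up as stated.
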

\begin{proof}
We start from the notation of $r^m_t$ and get
    \begin{align}
        2 \inner{x_* - \bar{x}_t} {\bar{r}_t}
        &= \frac{2}{M} \summ{m=1}{M}{\inner{x_* - x^m_t + x^m_t - \bar{x}_t}{r^m_t}} 
        \notag\\
        &= \frac{2}{M} \summ{m=1}{M}{\inner{x_* - x^m_t}{r^m_t}} 
        + \frac{2}{M} \summ{m=1}{M}{\inner{x^m_t - \bar{x}_t}{r^m_t}}. 
        \label{eq:St_3_0}
    \end{align}
    We estimate the first term by the $\mu_R$ - strong convexity and the Jensen's inequality \eqref{ap:Jensen}:
    \begin{align}
        \frac{2}{M} \summ{m=1}{M}{\inner{x_* - x^m_t}{r^m_t}} 
        \leq&\ \frac{1}{M} \summ{m=1}{M}{2 R (x_*) - 2 R(x^m_t) - \mu_R \norm{x_* - x^m_t}^2} 
        \notag\\
        \leq&\ \frac{1}{M} \summ{m=1}{M}{2 R (x_*) - 2 R(x^m_t) - \mu_R \norm{x_* - \bar{x}_t}^2}
        \notag\\
        =&\ 2 R (x_*) - 2 \bar{R}_t - \mu_R \norm{x_* - \bar{x}_t}^2. 
        \label{eq:St_3_1}
    \end{align}
    For the second part we apply the Young's inequality \eqref{ap:Cauchy–Schwarz}:
    \begin{align}
        \frac{2}{M} \summ{m=1}{M}{\inner{x^m_t - \bar{x}_t}{r^m_t}}
        =&\ \frac{2}{M} \summ{m=1}{M}{\inner{x^m_t - \bar{x}_t}{r^m_t - \nabla R( x_*)}}
        \notag\\
        &+ \frac{2}{M} \summ{m=1}{M}{\inner{x^m_t - \bar{x}_t}{\nabla R( x_*)}} 
        \notag\\
        =&\ 
        \frac{2}{M} \summ{m=1}{M}{\inner{x^m_t - \bar{x}_t}{r^m_t - \nabla R( x_*)}}
        \notag\\
        \leq&\ \frac{2}{M} \summ{m=1}{M}{L_R \norm{x^m_t - \bar{x}_t}^2 }
        + \frac{1}{M} \summ{m=1}{M}{\frac{1}{2 L_R} \norm{r^m_t-\nabla R( x_*)}^2 } 
        \notag\\
        =&\ 
        2 L_R V_t 
        + \frac{1}{M} \summ{m=1}{M}{\frac{1}{2 L_R} \norm{r^m_t-\nabla R( x_*)}^2 }.
        \label{eq:St_3_2_1}
    \end{align} 
    Here we also used the notation of $V_t$. By Assumption \ref{st:st_1} on the smoothness of $R$:
    \begin{align}
        \norm{r^m_t- \nabla R(x_*)}^2 \leq 2L_R (R(x^m_t) - R (x_*) - \inner{\nabla R(x_*)}{x^m_t - x_*}).
        \label{eq:St_3_2_2}
    \end{align}
    Substituting \eqref{eq:St_3_2_2} into \eqref{eq:St_3_2_1}, we complete the second part:
    \begin{align}
        \frac{2}{M} \summ{m=1}{M}{\inner{x^m_t - \bar{x}_t}{r^m_t}}
        &\leq 2 L_R V_t 
        + \frac{1}{M} \summ{m=1}{M}{ \left[R(x^m_t) - R (x_*) - \inner {\nabla R(x_*)}{x^m_t - x_*} \right]} \notag\\
        &\leq 2 L_R V_t 
        + (\bar{R}_t - R (x_*) - \inner{\nabla R(x_*)}{\bar{x}_t - x_*}). \label{eq:St_3_2}
    \end{align}
    Combining together \eqref{eq:St_3_0}, \eqref{eq:St_3_1}, \eqref{eq:St_3_2}, we gain:
    \begin{align*}
         - 2 \inner{\bar{x}_t - x_*}{\bar{r}_t} 
         \leq&\
         2 R (x_*) - 2 \bar{R}_t - \mu_R \norm{x_* - \bar{x}_t}^2 
         + 2 L_R V_t 
         \\
         &+ \left(\bar{R}_t - R (x_*) - \inner {\nabla R (x_*)}{\bar{x}_t - x_*}\right) 
         \\ 
         =& - \left(\bar{R}_t - R (x_*)\right)  + 2 L_R V_t - \mu_R \norm{x_* - \bar{x}_t}^2
         \\
         &- \inner{\nabla R (x_*)}{\bar{x}_t - x_*}.
    \end{align*}
    This finishes the proof. \qed
\end{proof}

\begin{lemma}\label{lem:AB-lemma}
For the iterates of Algorithm \ref{alg:localsgd} with $\gamma_t \leq \frac{1}{6L}$ for the problem \eqref{problem_main} under Assumption \ref{st:st_1} it holds that
    \begin{align*}
        \norm{\bar{x}_t - x_* - \gamma_{t} \bar{g}_t}^2 
        \leq&
        (1 - \gamma_{t} \mu)\norm{\bar{x}_t - x_*}^2 - \frac{\gamma_{t}}{6} \left(F(\bar{x}_t) - F (x_*)\right) + 2 \gamma_{t} L_R V_t.
\end{align*}
\end{lemma}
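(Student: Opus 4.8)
The four preceding lemmas are evidently designed to be plugged into one another, so the plan is simply to combine them. First I would take the exact identity of Lemma~\ref{lem:lem_1}, which writes $\norm{\bar x_t - x_* - \gamma_t \bar g_t}^2$ as $\norm{\bar x_t - x_*}^2$ plus the $\gamma_t^2$-term $\gamma_t^2\norm{\nabla Q(\bar x_t) + \bar r_t - \nabla Q(x_*) - \nabla R(x_*)}^2$ plus the two inner products $-2\gamma_t\inner{\bar x_t - x_*}{\nabla Q(\bar x_t)}$ and $-2\gamma_t\inner{\bar x_t - x_*}{\bar r_t}$. Then I would estimate the $\gamma_t^2$-term by Lemma~\ref{lem:g_t} with a free parameter $\zeta>0$, the first inner product by Lemma~\ref{lem:inner_1}, and the second by Lemma~\ref{lem:inner_2}, and add the three bounds.

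The bookkeeping then proceeds as follows. The norm terms collect to $(1-\gamma_t\mu_Q-\gamma_t\mu_R)\norm{\bar x_t - x_*}^2 = (1-\gamma_t\mu)\norm{\bar x_t - x_*}^2$, and the only $V_t$ contribution, $2\gamma_t L_R V_t$, comes from Lemma~\ref{lem:inner_2}. Writing $D_Q := Q(\bar x_t) - Q(x_*) - \inner{\nabla Q(x_*)}{\bar x_t - x_*}\ge 0$ and $D_R := \bar R_t - R(x_*) - \inner{\nabla R(x_*)}{\bar x_t - x_*}\ge 0$, after re-expressing the right-hand sides of Lemmas~\ref{lem:inner_1} and~\ref{lem:inner_2} in terms of $D_Q$, $D_R$, every remaining term of the form $\inner{\nabla Q(x_*)}{\bar x_t - x_*}$ or $\inner{\nabla R(x_*)}{\bar x_t - x_*}$ shows up with matched coefficients and combines into a multiple of $\inner{\nabla F(x_*)}{\bar x_t - x_*}$, which vanishes since $x_*$ minimizes $F$. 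What survives is $[\,2L_Q(1+\zeta)\gamma_t^2 - 2\gamma_t\,]\,D_Q + [\,2L_R(1+\tfrac1\zeta)\gamma_t^2 - \gamma_t\,]\,D_R$.

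To finish I would fix $\zeta = 1$ and use $\gamma_t\le\tfrac1{6L}$ together with $L_Q\le L$ and $L_R\le L$; here $L_R\le L$ is exactly $\varepsilon\le 1$ from Corollary~\ref{cor:linearity}, and $L_Q\le L$ holds because $\nabla^2 Q$ is a constant matrix with $\nabla^2 Q = \nabla^2 F(x) - \nabla^2 R(x) \preceq \nabla^2 F(x)\preceq L\,\mathrm{I}$ by convexity of $R$. These give $2L_Q(1+\zeta)\gamma_t^2 - 2\gamma_t \le -\tfrac{\gamma_t}{6}$ and $2L_R(1+\tfrac1\zeta)\gamma_t^2 - \gamma_t \le -\tfrac{\gamma_t}{6}$, so since $D_Q,D_R\ge 0$ the surviving term is at most $-\tfrac{\gamma_t}{6}(D_Q+D_R)$. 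Finally Jensen's inequality~\eqref{ap:Jensen} for the convex $R$ yields $\bar R_t\ge R(\bar x_t)$, hence $D_Q+D_R\ge F(\bar x_t) - F(x_*) - \inner{\nabla F(x_*)}{\bar x_t - x_*} = F(\bar x_t) - F(x_*)$, which produces the claimed $-\tfrac{\gamma_t}{6}(F(\bar x_t) - F(x_*))$ term.

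The main obstacle is purely the sign-and-coefficient bookkeeping: keeping careful track of which inner-product-with-$\nabla Q(x_*)$ and $\nabla R(x_*)$ terms are present so that the cancellation via $\nabla F(x_*)=0$ is clean, and verifying that the stepsize budget $\gamma_t\le\tfrac1{6L}$ still leaves both Bregman-type coefficients at most $-\gamma_t/6$ (the choice $\zeta=1$ has comfortable slack, but a careless choice of $\zeta$ would not close). A secondary subtlety easy to overlook is that Lemmas~\ref{lem:g_t} and~\ref{lem:inner_2} output the averaged value $\bar R_t$ rather than $R(\bar x_t)$, so the Jensen step at the very end is genuinely needed to recover $F(\bar x_t)$ as it appears in the statement.
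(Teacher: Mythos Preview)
Your argument is correct and in fact cleaner than the paper's. Both proofs start by plugging Lemmas~\ref{lem:g_t}, \ref{lem:inner_1}, \ref{lem:inner_2} into Lemma~\ref{lem:lem_1} and collect the $(1-\gamma_t\mu)\norm{\bar x_t-x_*}^2$ and $2\gamma_t L_R V_t$ terms identically. The difference is in how the remaining ``functional'' terms are handled. The paper keeps them in the form $(Q(\bar x_t)-Q(x_*))$ and $(\bar R_t-R(x_*))$; in that basis the coefficients in front of $\inner{\nabla Q(x_*)}{\bar x_t-x_*}$ and $\inner{\nabla R(x_*)}{\bar x_t-x_*}$ are unequal, so the paper chooses $\zeta$ as the positive root of $L_Q\zeta^2+(L_Q-L_R-\tfrac1{2\gamma_t})\zeta-L_R=0$ to force them to match, then separately verifies $\gamma_t L_Q(1+\zeta)\le \tfrac{11}{12}$, and only afterwards applies $\nabla F(x_*)=0$ and Jensen. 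Your rewriting in the Bregman quantities $D_Q,D_R$ is the key simplification: once you express $Q(\bar x_t)-Q(x_*)=D_Q+\inner{\nabla Q(x_*)}{\bar x_t-x_*}$ and $\bar R_t-R(x_*)=D_R+\inner{\nabla R(x_*)}{\bar x_t-x_*}$, the two inner-product coefficients are \emph{both} exactly $-2\gamma_t$ for \emph{every} $\zeta$, so they cancel via $\nabla F(x_*)=0$ automatically. This frees you to take $\zeta=1$, after which $L_Q\le L$ and $L_R\le L$ with $\gamma_t\le\tfrac1{6L}$ immediately give coefficients $\le -\tfrac{4\gamma_t}{3}$ and $\le -\tfrac{\gamma_t}{3}$ on $D_Q,D_R$, comfortably below the $-\tfrac{\gamma_t}{6}$ you need. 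The final Jensen step ($\bar R_t\ge R(\bar x_t)$) is used the same way in both proofs. One cosmetic remark: your justification $L_Q\le L$ via $\nabla^2 Q=\nabla^2 F-\nabla^2 R\preceq \nabla^2 F$ tacitly assumes $F,R\in C^2$; if you want to avoid that, note that convexity of $R=F-Q$ gives $Q(y)-Q(x)-\inner{\nabla Q(x)}{y-x}\le F(y)-F(x)-\inner{\nabla F(x)}{y-x}\le\tfrac{L}{2}\norm{y-x}^2$, which for quadratic $Q$ is exactly $L_Q\le L$.
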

\begin{proof} Substituting the results of Lemmas~\ref{lem:g_t},~\ref{lem:inner_1} and~\ref{lem:inner_2} into Lemma~\ref{lem:lem_1} and making simple algebra:
    \begin{align*}
            \norm{\bar{x}_t - x_* - \gamma_{t} \bar{g}_t}^2 
            \leq& \norm{\bar{x}_t - x_*}^2 
            \\
            &+ 2\gamma_{t}^2 L_Q (1 + \zeta) \left(Q(\bar{x}_t) - Q (x_*) - \inner{\nabla Q(x_*)}{\bar{x}_t - x_*}\right) 
            \\
            &+ 2\gamma_{t}^2 L_R \left(1 + \frac{1}{\zeta} \right) \left(\bar{R}_t - R (x_*) - \inner {\nabla R(x_*)}{\bar{x}_t - x_*}\right) 
            \\
            &+ \gamma_{t} \left( 2 Q (x_*) - 2 Q(\bar{x}_t) - \mu_Q \norm{\bar{x}_t - x_*}^2 \right) 
            \\
            &-\gamma_{t} \left(\bar{R}_t - R (x_*) + 2  L_R V_t - \mu_R \norm{x_* - \bar{x}_t}^2\right) 
            \\
            &- \gamma_{t}\inner{\nabla R(x_*)}{\bar{x}_t - x_*} 
            \\
            =& (1 - \gamma_{t} \mu_Q - \gamma_{t} \mu_R)\norm{\bar{x}_t - x_*}^2 + 2 \gamma_{t} L_R V_t 
            \\
            &+ 2 \gamma_t \coef {\gamma_{t} L_Q (1+\zeta) - 1} (Q(\bar{x}_t) - Q (x_*))
            \\
            &+ 2\gamma_{t} \coef{\gamma_{t} L_R \left(1+\frac{1}{\zeta}\right) + \frac{1}{2} - 1 } (\bar{R}_t - R (x_*))
            \\
            &- 2 \gamma_t \coef{\gamma_{t} L_Q(1+\zeta)} \inner{ \nabla Q(x_*)}{\bar{x}_t - x_*}
            \\
            &- 2 \gamma_t \coef{\gamma_{t} L_R \left(1+\frac{1}{\zeta} \right) + \frac{1}{2}} \inner{\nabla R(x_*)}{\bar{x}_t - x_*}.  
    \end{align*}
    We want to achieve $\gamma_{t} L_R \left(1+\frac{1}{\zeta} \right) + \frac{1}{2} = \gamma_{t} L_Q(1+\zeta)$. One can note that it is equivalent to find a positive root of 
    $$
    L_Q \zeta^2  + \left(L_Q - L_R - \frac{1}{2\gamma_{t}}\right) \zeta - L_R = 0.
    $$
    In particular, we take 
    $$
    \zeta = \frac{- \left(L_Q - L_R - \frac{1}{2\gamma_{t}} \right) + \sqrt{ \left(L_Q - L_R - \frac{1}{2\gamma_{t}}\right)^2 + 4 L_Q L_R}}{2L_Q} > 0.
    $$
    With such $\zeta$, we have
    \begin{align*}
            \norm{\bar{x}_t - x_* - \gamma_{t} \bar{g}_t}^2 
            \leq& 
            (1 - \gamma_{t} \mu_Q - \gamma_{t} \mu_R)\norm{\bar{x}_t - x_*}^2 + 2 \gamma_{t} L_R V_t 
            \\
            &+ 2 \gamma_t \coef {\gamma_{t} L_Q (1+\zeta) - 1} (Q(\bar{x}_t) - Q (x_*))
            \\
            &+ 2\gamma_{t} \coef {\gamma_{t} L_Q (1+\zeta) - 1} (\bar{R}_t - R (x_*))
            \\
            &- 2 \gamma_t \coef{\gamma_{t} L_Q(1+\zeta)} \inner{ \nabla R(x_*) + \nabla Q(x_*)}{\bar{x}_t - x_*}.  
    \end{align*}
    Taking into account the optimality condition $\nabla R(x_*) + \nabla Q(x_*) = 0$, we get
    \begin{align}
            \norm{\bar{x}_t - x_* - \gamma_{t} \bar{g}_t}^2 
            \leq& 
            (1 - \gamma_{t} \mu_Q - \gamma_{t} \mu_R)\norm{\bar{x}_t - x_*}^2 + 2 \gamma_{t} L_R V_t 
            \notag\\
            &+ 2 \gamma_t \coef {\gamma_{t} L_Q (1+\zeta) - 1} (Q(\bar{x}_t) - Q (x_*))
            \notag\\
            &+ 2\gamma_{t} \coef {\gamma_{t} L_Q (1+\zeta) - 1} (\bar{R}_t - R (x_*)).
            \label{eq:temp2}
    \end{align}
    Next, we prove that $\gamma_{t} L_Q(1+\zeta) \leq 1$ for $\gamma_t \leq \frac{1}{6L}$ with $L = L_Q + L_R$:
    \begin{align*}
        \gamma_{t} L_Q(1+\zeta) 
        &= \gamma_{t} L_Q 
        \coef {1 + \frac{- \left(L_Q - L_R - \frac{1}{2\gamma_{t}}\right)+\sqrt{\left(L_Q - L_R -\frac{1}{2\gamma_{t}}\right)^2 + 4 L_Q L_R}}{2L_Q} }
        \notag\\
        &=
        \frac{\gamma_{t}}{2} 
        \coef {2L_Q - \left(L_Q - L_R - \frac{1}{2\gamma_{t}}\right) + \sqrt{ \left(L_Q - L_R - \frac{1}{2\gamma_{t}} \right)^2 + 4 L_Q L_R} } 
        \notag\\ 
        & \leq
        \frac{\gamma_{t}}{2} 
        \coef { L_Q + L_R + \frac{1}{2\gamma_{t}} + \left|L_Q - L_R - \frac{1}{2\gamma_{t}} \right| + \sqrt{ 4 L_Q L_R} }
        \notag\\
        & \leq
        \frac{\gamma_{t}}{2} 
        \coef {L_Q + L_R + \frac{1}{2\gamma_{t}} + L + \frac{1}{2\gamma_{t}} + \sqrt{ 4 L^2} }
        \notag\\
        & \leq
        \frac{\gamma_{t}}{2} 
        \coef {5 L + \frac{1}{\gamma_{t}} }
        \leq
        \frac{\gamma_{t}}{2} 
        \coef {\frac{5}{6 \gamma_{t}} + \frac{1}{\gamma_{t}} } 
        \notag\\
        &=
        \frac{5}{12} + \frac{1}{2} \leq \frac{11}{12}.
    \end{align*}
    This fact gives that $\coef {\gamma_{t} L_Q (1+\zeta) - 1} \leq -\frac{1}{12}$, therefore, one can use the Jensen's inequality \eqref{ap:Jensen} to deduce:
    \begin{align}
        \label{eq:temp1}
        \coef {\gamma_{t} L_Q (1+\zeta) - 1} \bar{R}_t 
        =& \coef {\gamma_{t} L_Q (1+\zeta) - 1} \cdot \frac{1}{M} \sum_{m=1}^M R(x^m_t) 
        \notag\\
        \leq& \coef {\gamma_{t} L_Q (1+\zeta) - 1} \cdot R \left( \frac{1}{M} \sum_{m=1}^M x^m_t \right)
        \notag\\
        =& \coef {\gamma_{t} L_Q (1+\zeta) - 1} R (\bar x_t).
    \end{align}
    Combining \eqref{eq:temp2} and \eqref{eq:temp1}, we get
    \begin{align*}
            \norm{\bar{x}_t - x_* - \gamma_{t} \bar{g}_t}^2 
            \leq& 
            (1 - \gamma_{t} \mu_Q - \gamma_{t} \mu_R)\norm{\bar{x}_t - x_*}^2 + 2 \gamma_{t} L_R V_t 
            \notag\\
            &+ 2 \gamma_t \coef {\gamma_{t} L_Q (1+\zeta) - 1} (Q(\bar{x}_t) - Q (x_*))
            \notag\\
            &+ 2\gamma_{t} \coef {\gamma_{t} L_Q (1+\zeta) - 1} (R( \bar x_t) - R (x_*))
            \\
            =& 
            (1 - \gamma_{t} \mu_Q - \gamma_{t} \mu_R)\norm{\bar{x}_t - x_*}^2 + 2 \gamma_{t} L_R V_t 
            \notag\\
            &+ 2 \gamma_t \coef {\gamma_{t} L_Q (1+\zeta) - 1} (Q(\bar{x}_t) + R( \bar x_t) - Q (x_*) - R (x_*))
            \\
            \leq& 
            (1 - \gamma_{t} \mu_Q - \gamma_{t} \mu_R)\norm{\bar{x}_t - x_*}^2 + 2 \gamma_{t} L_R V_t
            \\
            &-\frac{\gamma_t}{6} (F(\bar x_t) - F(x_*)).
    \end{align*}
    Utilizing that $\mu_Q + \mu_R = \mu$, we come to the statement of the lemma. \qed
\end{proof}
Now we are ready to combine the obtained results and come to the descent lemma.
\begin{lemma} \label{lem:very_main}
    For the averaged variables $\bar x_t$ obtained by the iterates of Algorithm \ref{alg:localsgd} with $\gamma_t \leq \frac{1}{6L}$ for the problem \eqref{problem_main} under Assumption \ref{st:st_1} it holds that
    \begin{align*}
        \E \norm{\bar{x}_{t+1}-x_*}^2
        \leq& (1 - \gamma_{t} \mu) \E \norm{\bar{x}_t - x_*}^2 
        + \gamma_{t}^2 \E \norm{ \bar{\mc{g_t}} - \bar{g}_t}^2
        \\
        &- \frac{\gamma_{t}}{6} \E [F(\bar{x}_t) - F_*] 
        + 2 \gamma_{t} L_R \E [V_t]. 
    \end{align*}
\end{lemma}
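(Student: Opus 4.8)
The plan is to reduce everything to a one-step recursion on the averaged iterate. First I would record the elementary but crucial \emph{averaging identity}: regardless of whether $t+1$ is a communication step, one has $\bar{x}_{t+1} = \bar{x}_t - \gamma_t \bar{\mc{g}}_t$. Indeed, on a non-communication step each device performs $x^m_{t+1} = x^m_t - \gamma_t \mc{g}^m_t$, and averaging over $m$ gives exactly this; on a communication step every device is reset to $\frac{1}{M}\sum_{j}(x^j_t - \gamma_t \mc{g}^j_t)$, whose average over $m$ is again $\bar{x}_t - \gamma_t \bar{\mc{g}}_t$. Hence $\norm{\bar{x}_{t+1} - x_*}^2 = \norm{\bar{x}_t - x_* - \gamma_t \bar{\mc{g}}_t}^2$, and it suffices to bound the right-hand side.

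Next I would split the stochastic gradient into its conditional mean and a zero-mean fluctuation. Writing $\bar{\mc{g}}_t = \bar{g}_t + (\bar{\mc{g}}_t - \bar{g}_t)$ and expanding,
\begin{align*}
\norm{\bar{x}_t - x_* - \gamma_t \bar{\mc{g}}_t}^2
= \norm{\bar{x}_t - x_* - \gamma_t \bar{g}_t}^2
- 2\gamma_t \inner{\bar{x}_t - x_* - \gamma_t \bar{g}_t}{\bar{\mc{g}}_t - \bar{g}_t}
+ \gamma_t^2 \norm{\bar{\mc{g}}_t - \bar{g}_t}^2 .
\end{align*}
Conditioning on the $\sigma$-algebra $\mathcal{F}_t$ generated by the iterates through step $t$, the vector $\bar{x}_t - x_* - \gamma_t \bar{g}_t$ is $\mathcal{F}_t$-measurable, while $\E[\bar{\mc{g}}_t - \bar{g}_t \mid \mathcal{F}_t] = 0$ by the unbiasedness part of Assumption~\ref{ass:ass_2} (the samples $z^m_t$ being drawn independently of $\mathcal{F}_t$), so the cross term vanishes in expectation. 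This gives
\begin{align*}
\E\big[\norm{\bar{x}_{t+1} - x_*}^2 \,\big|\, \mathcal{F}_t\big]
= \norm{\bar{x}_t - x_* - \gamma_t \bar{g}_t}^2 + \gamma_t^2\, \E\big[\norm{\bar{\mc{g}}_t - \bar{g}_t}^2 \,\big|\, \mathcal{F}_t\big].
\end{align*}

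Finally I would apply Lemma~\ref{lem:AB-lemma} (legitimate since $\gamma_t \le \tfrac{1}{6L}$) to bound the deterministic term $\norm{\bar{x}_t - x_* - \gamma_t \bar{g}_t}^2$ by $(1-\gamma_t\mu)\norm{\bar{x}_t - x_*}^2 - \tfrac{\gamma_t}{6}(F(\bar{x}_t) - F(x_*)) + 2\gamma_t L_R V_t$, and then take total expectation via the tower property, using $F_* = F(x_*)$; this yields precisely the claimed inequality. I do not expect a genuine obstacle here, since the lemma is essentially a bookkeeping assembly of Lemma~\ref{lem:AB-lemma} with a bias--variance decomposition; the only points demanding care are (i) confirming that the averaging identity $\bar{x}_{t+1} = \bar{x}_t - \gamma_t \bar{\mc{g}}_t$ survives a communication round, and (ii) making the conditioning argument precise — that the per-device noises are independent of the past and that $\bar{g}_t = \frac{1}{M}\sum_m \nabla F(x^m_t)$ is indeed the conditional mean of $\bar{\mc{g}}_t$ — so that the cross term genuinely disappears.
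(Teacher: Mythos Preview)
Your proposal is correct and follows essentially the same route as the paper: establish $\bar{x}_{t+1} = \bar{x}_t - \gamma_t \bar{\mc{g}}_t$, perform the bias--variance split $\bar{\mc{g}}_t = \bar{g}_t + (\bar{\mc{g}}_t - \bar{g}_t)$, use unbiasedness to kill the cross term in expectation, and then invoke Lemma~\ref{lem:AB-lemma}. If anything, your write-up is slightly more careful than the paper's, since you explicitly verify the averaging identity on communication rounds and make the conditioning argument precise.
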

\begin{proof}
    Using the update of Algorithm \ref{alg:localsgd}, we have
    \begin{align*}
        \norm{\bar{x}_{t+1}-x_*}^2
        &= \norm{\bar{x}_{t} - \gamma_{t} \bar {\mc{g_t}} - x_*}^2
        = \norm{\bar{x}_{t} - \gamma_{t} \bar {\mc{g_t}} - x_* - \gamma_{t} \bar{g}_t + \gamma_{t} \bar{g}_t}^2 
        \\
        &= \norm{\bar{x}_t - x_* - \gamma_{t} \bar{g}_t}^2 
        + \gamma_{t}^2 \norm{\bar {\mc{g_t}} - \bar{g}_t}^2
        - 2 \gamma_{t} \inner{\bar{x}_t - x_* - \gamma_{t} \bar{g}_t}{\bar {\mc{g_t}} - \bar{g}_t}.
    \end{align*}
    Taking the expectation, 
    \begin{align*}
        \E \norm{\bar{x}_{t+1}-x_*}^2
        &= \E \norm{\bar{x}_t - x_* - \gamma_{t} \bar{g}_t}^2
        + \gamma_{t}^2 \E \norm{ \bar {\mc{g_t}} - \bar{g}_t}^2.
    \end{align*}
With the results of Lemma~\ref{lem:AB-lemma}:
    \begin{align*}
         \E \norm{\bar{x}_{t+1}-x_*}^2
        \leq&\ (1 - \gamma_{t} \mu) \E \norm{\bar{x}_t - x_*}^2 
        + \gamma_{t}^2 \E \norm{ \bar{\mc{g_t}} - \bar{g}_t}^2
        \\
        &- \frac{\gamma_{t}}{6} \E [F(\bar{x}_t) - F_*] 
        + 2 \gamma_{t} L_R \E [V_t].
    \end{align*}
This finishes the proof. \qed
\end{proof}

\section{Variance lemmas}


In the previous section, we proved the descent lemma. It remains to estimate $\E \norm{ \bar{\mc{g_t}} - \bar{g}_t}^2$ and $\E [V_t]$. 
\begin{lemma} \label{lem:rho_2}
For the stochastic gradients from Algorithm \ref{alg:localsgd} for the problem \eqref{problem_main} under Assumptions \ref{st:st_1} and \ref{ass:ass_2} it holds that
    \begin{align*}
        \E \norm{\bar{\mc{g}}_t - \bar{g}_t}^2 
        \leq 
        \frac{\sigma^2}{M} + \frac{\rho L^2 }{M} V_t  + \frac{\rho L^2 }{M} \norm{\bar{x}_t - x_*}^2. 
    \end{align*}
\end{lemma}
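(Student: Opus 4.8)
The plan is to exploit the independence of the stochastic samples $z^1_t,\dots,z^M_t$ across devices to turn the variance of the average into an average of variances, then apply the strong growth condition of Assumption~\ref{ass:ass_2} device-wise, and finally convert the resulting gradient norms into distances via $L$-smoothness together with the bias--variance split of $x^m_t$ around $\bar x_t$.

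\textbf{Step 1 (decouple across devices).} Write $\bar{\mc{g}}_t - \bar g_t = \frac1M\sum_{m=1}^M(\mc{g}^m_t - g^m_t)$, where each summand is, conditionally on the iterates $x^1_t,\dots,x^M_t$, a zero-mean random vector because $g^m_t = \E[\mc{g}^m_t]=\nabla F(x^m_t)$ by the unbiasedness part of Assumption~\ref{ass:ass_2}. Since the samples $z^m_t\sim\mathcal D$ are drawn independently across $m$, the summands are (conditionally) independent and zero-mean, so all cross terms vanish and
\[
\E\norm{\bar{\mc{g}}_t - \bar g_t}^2 = \frac1{M^2}\sum_{m=1}^M \E\norm{\mc{g}^m_t - g^m_t}^2 .
\]

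\textbf{Step 2 (strong growth, then smoothness).} Applying the strong growth condition to each term gives $\E\norm{\mc{g}^m_t - g^m_t}^2 \le \sigma^2 + \rho\norm{\nabla F(x^m_t)}^2$, hence
\[
\E\norm{\bar{\mc{g}}_t - \bar g_t}^2 \le \frac{\sigma^2}{M} + \frac{\rho}{M^2}\sum_{m=1}^M \norm{\nabla F(x^m_t)}^2 .
\]
Because $\nabla F(x_*)=0$ and $F$ is $L$-smooth (Corollary~\ref{cor:linearity}(c) / Assumption~\ref{st:st_1}), $\norm{\nabla F(x^m_t)}^2 = \norm{\nabla F(x^m_t)-\nabla F(x_*)}^2 \le L^2\norm{x^m_t - x_*}^2$. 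Finally, the decomposition $x^m_t - x_* = (x^m_t-\bar x_t)+(\bar x_t-x_*)$ with the identity $\frac1M\sum_{m}(x^m_t-\bar x_t)=0$ kills the cross term, yielding $\frac1M\sum_{m=1}^M\norm{x^m_t-x_*}^2 = V_t + \norm{\bar x_t - x_*}^2$. Substituting back gives exactly the claimed bound.

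\textbf{Main obstacle.} There is no deep difficulty here; the one point requiring care is the conditioning in Step 1 — one must take expectations over the fresh samples $z^m_t$ with the iterates $x^m_t$ held fixed (i.e.\ conditionally on the natural filtration through iteration $t$), so that $g^m_t$ is indeed the conditional mean of $\mc{g}^m_t$ and the independence across $m$ genuinely annihilates the cross terms; the tower rule then removes the conditioning. Everything else is routine algebra.
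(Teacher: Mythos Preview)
Your proposal is correct and follows essentially the same route as the paper: independence/unbiasedness to decouple the variance across devices, then the strong growth condition, then $L$-smoothness with $\nabla F(x_*)=0$, and finally the bias--variance split $\frac1M\sum_m\norm{x^m_t-x_*}^2 = V_t + \norm{\bar x_t - x_*}^2$. The only cosmetic slip is the citation---$L$-smoothness of $F$ comes directly from Assumption~\ref{st:st_1}, not from Corollary~\ref{cor:linearity}(c) (which gives $\mu$-strong convexity)---but this does not affect the argument.
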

\begin{proof}
Firstly, we note that all $\mc{g}^m_t$ are independent and unbiased, it gives that
    \begin{align}
        \label{eq:temp4}
        \E \norm{\bar{\mc{g}}_t - \bar{g}_t}^2 
        &=
        \E \norm{\fsumm{m=1}{M}{[\mc{g}^m_t - g^m_t]}}^2 = \frac{1}{M^2} \summ{m=1}{M} \E \norm{\mc{g}^m_t - g^m_t}^2. 
    \end{align}
Next, we use Assumption \ref{ass:ass_2} to get
    \begin{align}
        \label{eq:temp3}
        \frac{1}{M} \summ{m=1}{M} \E \norm{\mc{g}^m_t - g^m_t}^2
        \leq&\
        \sigma^2 + \frac{\rho}{M} \summ{m=1}{M} \norm {g^m_t}^2 
        \nonumber \\
        =&\
        \sigma^2 + \frac{\rho}{M} \summ{m=1}{M} \norm {\nabla F(x^m_t) - \nabla F(x_*)}^2 
        \notag \\
        \leq&\ 
        \sigma^2 + \frac{\rho L^2}{M} \summ{m=1}{M} \norm{x^m_t - x_*}^2 
        \notag \\
        =&\
        \sigma^2 + \frac{\rho L^2}{M} \summ{m=1}{M} \norm{x^m_t - \bar{x}_t + \bar{x}_t - x_*}^2 
        \notag \\
        =&\
        \sigma^2 + \frac{\rho L^2}{M} \summ{m=1}{M} \left( \norm{x^m_t - \bar{x}_t}^2
        + \norm{\bar{x}_t - x_*}^2 \right)
        \notag \\
        &+ \frac{2\rho L^2}{M} \summ{m=1}{M} \inner{x^m_t - \bar{x}_t}{\bar{x}_t - x_*} 
        \notag \\
        =&\
        \sigma^2 + \frac{\rho L^2}{M} \summ{m=1}{M} \left( \norm{x^m_t - \bar{x}_t}^2
        + \norm{\bar{x}_t - x_*}^2 \right)
        \notag \\
        =&\
        \sigma^2 + \rho L^2 V_t
        + \rho L^2 \norm{\bar{x}_t - x_*}^2.
    \end{align}
Substituting \eqref{eq:temp3} into \eqref{eq:temp4} and applying the definition of $V_t$, we get the final result. \qed
\end{proof}

\begin{lemma} \label{lem:lemma_Vt}
    Suppose that Assumptions \ref{st:st_1} and \ref{ass:ass_2} hold.
    \begin{itemize}
        \item[(a)] If $\rho = 0$ and $\gamma_{t} = \gamma \leq \frac{1}{6L}$, then
        \begin{align}
            \label{eq: bound-1}
            \E [V_t] \leq (H-1) \sigma^2 \gamma^2; 
        \end{align}
        \item[(b)] If $\rho = 0$, $\mu > 0$ and $\gamma_t = \frac{2}{\mu(\xi + t + 1)}$ with $\xi \geq 0$, then 
            \begin{align}
                \label{bound-2}
                \E [V_t] \leq \frac{2 (M - 1) (H - 1) \sigma^2 \gamma_{t-1}^2}{M};
            \end{align}
        \item[(c)] If $\rho = 0$ and $\gamma_t$ is non-decreasing, then
            \begin{align}
                \label{bound-3}
                \E [V_t] \leq \frac{2 (M-1) (H-1) \sigma^2 \gamma_{t-H+1 \land 0}}{M};
            \end{align}
        \item[(d)] If $\mu > 0$ and $\gamma_t \leq \min\left\{ \frac{\mu}{3\rho L^2}, \frac{1}{6L} \right\}$, then 
            \begin{align}
                \label{bound-4}
                \E [V_t] \leq
                \summ{i=kH}{kH+a - 1} \left(
                \rho \gamma_{i}^2 L^2 \E \norm{r_i}^2 + \gamma_{i}^2 \sigma^2\right)
                \prod\limits_{j = i + 1}^{kH+a - 1}\left(1 - \frac{\gamma_{j} \mu}{2}\right)
            \end{align}
            with $t = kH + a$.
    \end{itemize}




\end{lemma}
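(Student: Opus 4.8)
The plan is to bound the consensus error $V_t = \frac{1}{M}\sum_{m=1}^M \|x^m_t - \bar x_t\|^2$ by tracking how it evolves over one communication round. The key structural fact is that $V_t = 0$ whenever $t$ is a multiple of $H$ (right after averaging), so if we write $t = kH + a$ with $0 \le a \le H-1$, then $V_t$ only accumulates error from the $a$ local steps taken since the last synchronization. For each such step, $x^m_{i+1} - \bar x_{i+1} = (x^m_i - \bar x_i) - \gamma_i(\mc g^m_i - \bar{\mc g}_i)$, so expanding the square and using that $\E[\mc g^m_i - \bar{\mc g}_i \mid \mathcal F_i]$ is a deterministic quantity (the $g^m_i - \bar g_i$ term) while the purely stochastic fluctuations are independent across $m$, I get a recursion for $\E[V_{i+1}]$ in terms of $\E[V_i]$ plus a cross term and a variance term.

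First I would handle the cross-term $-2\gamma_i \E\langle x^m_i - \bar x_i, g^m_i - \bar g_i\rangle$ (averaged over $m$): by $\mu$-strong convexity / smoothness of $F$ one gets $\frac{1}{M}\sum_m \langle x^m_i - \bar x_i, g^m_i - \bar g_i\rangle \ge \mu V_i$ (up to sign), which pulls out a contraction factor; combined with a Young's inequality this is where the $(1 - \gamma_j\mu/2)$ factors in part (d) come from, and in the simpler parts (a)–(c) where we don't exploit contraction we just bound the deterministic part crudely and use that over $H-1$ steps from a synchronized point the deviations are small. For the variance term I would invoke Lemma~\ref{lem:rho_2}-style bookkeeping: $\frac{1}{M}\sum_m \E\|\mc g^m_i - g^m_i\|^2 \le \sigma^2 + \rho L^2 V_i + \rho L^2 \|\bar x_i - x_*\|^2$, which after iterating the recursion over $i$ from $kH$ to $kH+a-1$ yields the telescoped/product form in \eqref{bound-4}, with the $\rho\gamma_i^2 L^2 \E\|r_i\|^2 + \gamma_i^2\sigma^2$ summand and the product of contraction factors. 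For parts (a)–(c), setting $\rho = 0$ kills the $\|r_i\|^2$ and $V_i$ feedback terms, so the recursion becomes linear with only the $\gamma^2\sigma^2$ driving term; summing over at most $H-1$ steps gives $(H-1)\sigma^2\gamma^2$ in (a), and the sharper $\frac{M-1}{M}$ factor in (b) and (c) comes from being slightly more careful — noting that $\bar{\mc g}_i$ already averages the noise, so the effective per-coordinate variance of $\mc g^m_i - \bar{\mc g}_i$ carries a $\frac{M-1}{M}$ factor rather than $1$ — together with monotonicity of the stepsize to replace all $\gamma_i$ by the largest relevant one.

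The main obstacle I expect is getting the stepsize conditions exactly right so that the feedback terms can be absorbed: in part (d), the term $\rho L^2 V_i$ appearing in the variance bound feeds back into $V_{i+1}$, so I need $\gamma_i \cdot \rho L^2$ small enough (hence the $\gamma_t \le \mu/(3\rho L^2)$ condition) that the map $\E[V_i] \mapsto \E[V_{i+1}]$ stays a genuine contraction with rate $1 - \gamma_i\mu/2$ even after this self-interaction is included, while simultaneously keeping $\gamma_t \le 1/(6L)$ so the earlier descent lemmas apply. Balancing these two constraints and verifying the constants (the $1/2$ in the contraction, the $3$ in $\mu/(3\rho L^2)$) is the delicate part; the rest is careful but routine expansion, application of Young's and Jensen's inequalities, and summation of a geometric-type recursion. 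For (b) and (c) the subtlety is instead purely combinatorial — correctly indexing which $\gamma$ survives after the round boundary, i.e., justifying the $\gamma_{t-1}$ versus $\gamma_{t-H+1\land 0}$ bookkeeping using monotonicity of the stepsize sequence.
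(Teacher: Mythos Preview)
Your plan for part (d) matches the paper's argument in structure: derive a one-step recursion $\E[V_{t+1}] \leq (1-\tfrac{\gamma_t\mu}{2})\E[V_t] + \gamma_t^2\sigma^2 + \gamma_t^2\rho L^2\E\|r_t\|^2$ and unroll it from the last synchronization point $kH$ where $V_{kH}=0$. For parts (a)--(c) the paper simply cites \cite{khaled2020tighter} and \cite{woodworth2020local}, so your sketch of how those cases fall out of the same recursion with $\rho=0$ is more than the paper provides and is fine.

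There is, however, one technical step where your sketch is imprecise and would fail as written. After expanding $\|x^m_{t+1}-\bar x_{t+1}\|^2$ you obtain, besides the cross term and the purely stochastic noise, a \emph{deterministic drift} term $\gamma_t^2\|g^m_t-\bar g_t\|^2$. You propose to handle this ``combined with a Young's inequality'', but a Young/Lipschitz bound $\frac{1}{M}\sum_m\|g^m_t-\bar g_t\|^2\leq L^2 V_t$ only yields a coefficient $(1-2\gamma_t\mu + \gamma_t^2 L^2)$ in front of $V_t$, and $\gamma_t^2 L^2$ (of order $\gamma_t L$) cannot be absorbed by $-2\gamma_t\mu V_t$ without an unstated bound on the condition number $L/\mu$. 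The paper instead invokes cocoercivity (Corollary~\ref{cor:nesterov}) to get
\[
\frac{1}{M}\sum_m\|g^m_t-\bar g_t\|^2 \;\leq\; \frac{2L}{M}\sum_m\bigl\langle x^m_t-\bar x_t,\nabla F(x^m_t)\bigr\rangle,
\]
which \emph{merges the drift into the same inner product as the cross term}, giving a combined factor $-\tfrac{2\gamma_t}{M}(1-\gamma_t L)\sum_m\langle x^m_t-\bar x_t,g^m_t\rangle$; only then is strong convexity (in the function-value form, plus Jensen) applied to this combined inner product to produce $-\tfrac{5}{6}\gamma_t\mu V_t$. The condition $\gamma_t\leq \mu/(3\rho L^2)$ then absorbs the $\gamma_t^2\rho L^2 V_t$ feedback from the noise bound exactly as you anticipated. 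An equally clean alternative is to bound the entire deterministic part at once via the gradient-step contraction $\|(I-\gamma\nabla F)x-(I-\gamma\nabla F)y\|^2\leq(1-\gamma\mu)\|x-y\|^2$ for $\gamma\leq 1/L$, after recentering the variance at $\bar x_t-\gamma_t\nabla F(\bar x_t)$. Either way, the missing ingredient is cocoercivity of the gradient, not Young's inequality.
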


\begin{proof}
    First, note that cases $(a)$, $(b)$ and $(c)$ have been already proved ($(a)$ in \cite{khaled2020tighter} and $(b)$, $(c)$ in \cite{woodworth2020local}). Next, let us prove $(d)$.









\noindent For $t \in \mathbb{N}$ we have $x^m_{t+1} = x^m_t - \gamma_{t} \mc{g}^m_t$ 
and $\bar{x}_{t+1} = \bar{x}_t - \gamma_{t} \mc{\bar{g}}_t$ if $(t + 1) \mod H \neq 0$. Hence, for such $t$ and for conditioned expectation it is true that:
    \begin{align*}
        \E \norm{x^m_{t+1} - \bar{x}_{t+1}}^2 
        =&\ \norm{x^m_t - \bar{x}_t}^2 + \gamma_{t}^2 \E \norm{\mc{g}^m_t - \mc{\bar{g}}_t}^2 - 2\gamma_{t} \E [\inner{x^m_t - \bar{x}_t}{\mc{g}^m_t - \mc{\bar{g}}_t}] \\
        =&\ \norm{x^m_t - \bar{x}_t}^2 + \gamma_{t}^2 \E \norm{\mc{g}^m_t - \mc{\bar{g}}_t}^2 - 2\gamma_{t} \inner{x^m_t - \bar{x}_t}{g^m_t} \\ &+ 2\gamma_{t} \inner {x^m_t - \bar{x}_t}{\bar{g}_t}.
    \end{align*}
Averaging over $m$, the last terms collapse to zero. Therefore,
    \begin{align}
        \E [V_{t+1}] =
        V_t + \frac{\gamma_{t}^2}{M} \summ{m=1}{M} \E \norm{\mc{g}^m_t - \mc{\bar{g}}_t}^2 - \frac{2\gamma_{t}}{M} \summ{m=1}{M} \inner{x^m_t - \bar{x}_t}{g^m_t}. \label{eq:1904_4}
    \end{align}
By expanding square into the second term of \eqref{eq:1904_4}, we get
    \begin{align} 
        \E \norm{\mc{g}^m_t - \mc{\bar{g}}_t}^2
        &= \E \norm{\mc{g}^m_t - \bar{g}_t + \bar{g}_t - \mc{\bar{g}}_t}^2 \nonumber \\
        &= 
        \E \norm{\mc{g}^m_t - \bar{g}_t}^2 + \E \norm{\mc{\bar{g}}_t - \bar{g}_t}^2 + 2\E [\inner{\mc{g}^m_t - \bar{g}_t}{\bar{g}_t - \mc{\bar{g}}_t}]. \label{eq:1904_1}
    \end{align}
And again:
    \begin{align}
        \E \norm{\mc{g}^m_t - \bar{g}_t}^2 
        &= \E \norm{\mc{g}^m_t - g^m_t + g^m_t - \bar{g}_t}^2 \nonumber\\
        &= \E \norm{\mc{g}^m_t - g^m_t}^2 
        + \norm{g^m_t - \bar{g}_t}^2
        + 2\E[\inner{\mc{g}^m_t - g^m_t}{g^m_t - \bar{g}_t}] \nonumber \\
        &= \E \norm{\mc{g}^m_t - g^m_t}^2 
        + \norm{g^m_t - \bar{g}_t}^2
        + 2 \inner{g^m_t - g^m_t}{g^m_t - \bar{g}_t} \nonumber \\
        &= \E \norm{\mc{g}^m_t - g^m_t}^2 
        + \norm{g^m_t - \bar{g}_t}^2. \label{eq:1904_2}
    \end{align}
Combining \eqref{eq:1904_1} and \eqref{eq:1904_2}, we have
    \begin{align*}
        \E \norm{\mc{g}^m_t - \mc{\bar{g}}_t}^2
        =&\ \E \norm{\mc{g}^m_t - g^m_t}^2 
        + \norm{g^m_t - \bar{g}_t}^2 + \E \norm{\mc{\bar{g}}_t - \bar{g}_t}^2 \nonumber\\&+ 2\E [\inner{\mc{g}^m_t - \bar{g}_t}{\bar{g}_t - \mc{\bar{g}}_t}].
    \end{align*}
By averaging both sides over $m$:
    \begin{align}
        \fsumm{m=1}{M} \E \norm{\mc{g}^m_t - \mc{\bar{g}}_t}^2
        =&\ \fsumm{m=1}{M} \E \norm{\mc{g}^m_t - g^m_t}^2 
        + \fsumm{m=1}{M} \norm{g^m_t - \bar{g}_t}^2 \nonumber\\
        &+ \E \norm{\mc{\bar{g}}_t - \bar{g}_t}^2 
        + 2\E [\inner{\mc{\bar{g}}_t - \bar{g}_t}{\bar{g}_t - \mc{\bar{g}}_t}] \nonumber\\
        =&\ \fsumm{m=1}{M} \E \norm{\mc{g}^m_t - g^m_t}^2 
        + \fsumm{m=1}{M} \norm{g^m_t - \bar{g}_t}^2 \nonumber\\
        &+ \E \norm{\mc{\bar{g}}_t - \bar{g}_t}^2 
        - 2\E \norm{\mc{\bar{g}}_t - \bar{g}_t}^2 \nonumber\\
        \leq&\ \fsumm{m=1}{M} \E \norm{\mc{g}^m_t - g^m_t}^2 
        + \fsumm{m=1}{M} \norm{g^m_t - \bar{g}_t}^2. \label{eq:1904_3}
    \end{align}
Using \cref{st:st_1}, we bound the second term here as follows:
    \begin{align}
        \fsumm{m=1}{M} \norm{g^m_t - \bar{g}_t}^2 
        =&\
        \fsumm{m=1}{M} \norm{g^m_t - \nabla F(\bar{x}_t) + \nabla F(\bar{x}_t) - \bar{g}_t}^2 \nonumber\\
        =&\
        \fsumm{m=1}{M} \norm{g^m_t - \nabla F(\bar{x}_t)}^2 + \norm{\nabla F(\bar{x}_t) - \bar{g}_t}^2 \nonumber \\&+ \frac{2}{M}\sum\limits_{m=1}^M\inner{g^m_t - \nabla F(\bar{x}_t)}{\nabla F(\bar{x}_t) - \bar{g}_t} \nonumber \\
        =&\ 
        \fsumm{m=1}{M} \norm{g^m_t - \nabla F(\bar{x}_t)}^2 + \norm{\nabla F(\bar{x}_t) - \bar{g}_t}^2 \nonumber \\ &- 2 \norm{\nabla F(\bar{x}_t) - \bar{g}_t}^2 \nonumber\\
        =&\ 
        \fsumm{m=1}{M} \norm{g^m_t - \nabla F(\bar{x}_t)}^2 - \norm{\nabla F(\bar{x}_t) - \bar{g}_t}^2 \nonumber \\
        \leq&\ 
        \fsumm{m=1}{M} \norm{g^m_t - \nabla F(\bar{x}_t)}^2 \nonumber \\
        =&\
        \fsumm{m=1}{M} \norm{\nabla F(x^m_t) - \nabla F(\bar{x}_t)}^2 \nonumber\\
        {\leq}&\
        \fsumm{m=1}{M} 2L \left(F(\bar{x}_t) - F(x^m_t) - \inner{\bar{x}_t - x^m_t}{\nabla F(x^m_t)}\right) \nonumber\\
        \overset{\eqref{ap:Jensen}}{\leq}&\
        \frac{2L}{M} \summ{m=1}{M} \inner{x^m_t - \bar{x}_t}{\nabla F(x^m_t)}.
        \label{eq:2004_1}
    \end{align}
Substituting \eqref{eq:2004_1} into \eqref{eq:1904_3} and applying \cref{lem:rho_2}, one can obtain
    \begin{align*}
        \fsumm{m=1}{M} \E \norm{\mc{g}^m_t - \mc{\bar{g}}_t} 
        \leq&\
        \fsumm{m=1}{M} \E \norm{\mc{g}^m_t - g^m_t}^2 
        +
        \frac{2L}{M} \summ{m=1}{M} \inner{x^m_t - \bar{x}_t}{\nabla F(x^m_t)} \nonumber\\
        \leq&\
        \sigma^2 + \rho L^2 V_t + \rho L^2 \norm{r_t}^2
        \nonumber\\ &+
        \frac{2L}{M} \summ{m=1}{M} \inner{x^m_t - \bar{x}_t}{\nabla F(x^m_t)}.
    \end{align*}
Consequently, combining the result above with \eqref{eq:1904_4}, we have
    \begin{align}
        \E [V_{t+1}] 
        =&\
        V_t + \frac{\gamma_{t}^2}{M} \summ{m=1}{M} \E \norm{\mc{g}^m_t - \mc{\bar{g}}_t}^2 - \frac{2\gamma_{t}}{M} \summ{m=1}{M} \inner{x^m_t - \bar{x}_t}{\nabla F(x^m_t)}\nonumber \\
        \leq&\ 
        V_t 
        + \gamma_{t}^2 \sigma^2 + \gamma_{t}^2 \rho L^2 V_t + \gamma_{t}^2 \rho L^2 \norm{r_t}^2
        \nonumber \\&-\ \frac{2\gamma_{t}}{M}(1 - \gamma_{t} L) \summ{m=1}{M} \inner{x^m_t - \bar{x}_t}{\nabla F(x^m_t)}.
        \label{eq:1904_5}
    \end{align}
Now let us analyize last term. We know that $\gamma_{t} \leq \frac{1}{6L}$, therefore $1 - \gamma_{t} L \geq 0$. Thus, by the strong convexity with $\mu$ and the Jensen's inequality \eqref{ap:Jensen}:
    \begin{align}
        - \frac{2\gamma_{t}}{M}(1 - \gamma_{t} L) &\sum\limits_{m=1}^{M}\inner{x^m_t - \bar{x}_t}{\nabla F(x^m_t)} 
        =
        \frac{2\gamma_{t}}{M}(1 - \gamma_{t} L) \summ{m=1}{M} \inner{\bar{x}_t - x^m_t}{\nabla F(x^m_t)} \nonumber\\
        &{\leq}\
        \frac{2\gamma_{t}}{M}(1 - \gamma_{t} L) \summ{m=1}{M} \left(F(\bar{x}_t) - F(x^m_t) - \frac{\mu}{2} \norm{x^m_t - \bar{x}_t}^2 \right) \nonumber\\
        &\overset{\eqref{ap:Jensen}}{\leq}
        - \frac{\gamma_{t}}{M}(1 - \gamma_{t} L) \summ{m=1}{M} \mu \norm{x^m_t - \bar{x}_t}^2 
        =
        - \gamma_{t} (1 - \gamma_{t} L) \mu V_t.
        \label{eq:1904_6}
    \end{align}
Plugging \eqref{eq:1904_6} into \eqref{eq:1904_5} and using the boundary $\gamma_{t} \leq \frac{1}{6L}$, we obtain
    \begin{align}
    \label{eq: recur-V}
        \E [V_{t+1}]
        &\leq
        V_t 
        + \gamma_{t}^2 \sigma^2 + \gamma_{t}^2 \rho L^2 V_t + \gamma_{t}^2 \rho L^2 \norm{r_t}^2
        - \gamma_{t} (1 - \gamma_{t} L) \mu V_t \nonumber\\
        &=
        (1 - \gamma_{t} (1 - \gamma_{t} L) \mu) V_t
        + \gamma_{t}^2 \sigma^2 + \gamma_{t}^2 \rho L^2 V_t + \gamma_{t}^2 \rho L^2 \norm{r_t}^2 \nonumber\\
        &\leq
        \left(1 - \frac {5\gamma_{t} \mu}{6} + \gamma_{t}^2 \rho L^2\right) V_t
        +  \gamma_{t}^2 \sigma^2 + \gamma_{t}^2 \rho L^2 \norm{r_t}^2,
    \end{align}
since $1 - \gamma_t L \geq \frac{5}{6}$. Moreover, we have $ \gamma_{t} \leq \frac{\mu}{3\rho L^2} $, which means that $-\frac{5\gamma_{t} \mu }{6} + \gamma_{t}^2 \rho L^2 \leq -\frac{\gamma_{t} \mu}{2}$. By substituting this into \eqref{eq: recur-V}, we get
    \begin{align}
        \E [V_{t+1}]
        &\leq
        \left(1 - \frac {\gamma_{t} \mu}{2}\right) V_t
        + \gamma_{t}^2 \sigma^2 + \gamma_{t}^2 \rho L^2 \norm{r_t}^2.
        \label{eq:2204_1}
    \end{align}
Representing $t = kH + a \ (k, a \in \mathbb{N}; \ a < H)$, recalling that $V_{kH} = 0$, recursing~\eqref{eq:2204_1}, taking the full expectation and taking into account that the product of zero terms is equal to $1$, we conclude
    \begin{align*}
        \E [V_{t + 1}]
        \leq&\
        \summ{i=kH}{kH+a} \left(
        \rho \gamma_{i}^2 L^2 \E \norm{r_i}^2 + \gamma_{i}^2 \sigma^2
        \right)
        \prod\limits_{j = i + 1}^{kH+a}\left(1 - \frac{\gamma_{j} \mu}{2}\right)
        \\ &+ \prod\limits_{i = kH}^{kH+a}\left(1 - \frac{\gamma_{i} \mu}{2}\right)  \cdot V_{kH} \\
        =&\
        \summ{i=kH}{kH+a} \left(
        \rho \gamma_{i}^2 L^2 \E \norm{r_i}^2 + \gamma_{i}^2 \sigma^2
        \right)
        \prod\limits_{j = i + 1}^{kH+a}\left(1 - \frac{\gamma_{j} \mu}{2}\right). 
    \end{align*}
This ends the proof. \qed
\end{proof} 

\section{Proof of Theorem~\ref{th:th_1}}
\label{sec: proof-thm-1}
\begin{proof}
We choose tune stepsizes $\gamma_t$ as in Theorem 2 from \cite{woodworth2020local}. Proof is divided into two parts with two different techniques of choosing $\gamma_t$.

\noindent\textbf{Case (a)}: $T \leq 2\kappa$. 
In this option, we choose the constant stepsizes $\gamma_t = \gamma  = \frac{1}{6L}$ and the sequence of weights $w_t := (1 - \mu \gamma)^{-t-1}$.
Substituting the result of Lemmas \ref{lem:rho_2} and \ref{lem:lemma_Vt} into Lemma~\ref{lem:very_main}, we obtain
    \begin{align}
        \E \norm{\bar{x}_{t+1}-x_*}^2
            \leq&\
            (1 - \gamma \mu) \E \norm{\bar{x}_t - x_*}^2 
            + \frac{\gamma^2 \sigma^2}{M}
            - \frac{\gamma}{6} \E [F(\bar{x}_t) - F(x_*)] 
            \nonumber\\&+ 2 L_R (H-1) \gamma^3 \sigma^2.
    \end{align} \label{eq:interesting}
Rearranging,
    \begin{align}
        \frac{\gamma}{6} \E [F(\bar{x}_t) - F(x_*)]  \leq&\
            (1 - \gamma \mu) \E \norm{\bar{x}_t - x_*}^2 
            - \E \norm{\bar{x}_{t+1}-x_*}^2
            + \frac{\gamma^2 \sigma^2}{M}
            \nonumber \\&+ 2 L_R (H-1) \gamma^3 \sigma^2.
    \end{align}
Dividing both sides by $\gamma$,
    \begin{align*}
        \frac{1}{6} \E [F(\bar{x}_t) - F(x_*)]  \leq&\
            \frac{1}{\gamma}(1 - \gamma \mu) \E \norm{\bar{x}_t - x_*}^2 
            - \frac{1}{\gamma} \E \norm{\bar{x}_{t+1}-x_*}^2
            + \frac{\gamma \sigma^2}{M}
            \nonumber \\&+ 2 L_R (H-1) \gamma^2 \sigma^2.
    \end{align*}
Multiplying both sides by $w_t$ and summing from $0$ to $T$,
    \begin{align*}
        \frac{1}{6} \summ{t=0}{T-1} w_t \E [F(\bar{x}_t) - F(x_*)]
        \leq&\
        \summ{t=0}{T-1} w_t
        \Bigg( 
        \frac{1}{\gamma}(1 - \gamma \mu) \E \norm{\bar{x}_t - x_*}^2 
        - \frac{1}{\gamma} \E \norm{\bar{x}_{t+1}-x_*}^2
        \\ &+ \frac{\gamma \sigma^2}{M}
        + 2 L_R (H-1) \gamma^2 \sigma^2
        \Bigg) \\
        =&\
        \summ{t=0}{T-1}
        \Bigg(
        \frac{1}{\gamma}(1-\gamma \mu)^{-t} \E \norm{\bar{x}_{t}-x_*}^2
        \\&- \frac{1}{\gamma}(1-\gamma \mu)^{(-t + 1)} \E \norm{\bar{x}_{t+1}-x_*}^2
        \\&+ w_t 2 L_R (H-1) \gamma^2 \sigma^2 +  \frac{w_t\gamma \sigma^2}{M} \Bigg) \\
        \leq&\
        \frac{1}{\gamma } \E \norm{x_0 - x_*}^2 
        +
        \left(
        \frac{\gamma \sigma^2}{M}
        + 2 L_R (H-1) \gamma^2 \sigma^2
        \right)
        \summ{t=0}{T-1} w_t.
    \end{align*}
Let us denote $W_T := \summ{t=0}{T-1} w_t$. Hence, multiplying both parts by $\frac{1}{W_t}$, one can obtain
    \begin{align*}
        \frac{1}{6 W_T} \summ{t=0}{T} w_t \E [F(\bar{x}_t) - F(x_*)] 
        \leq 
        \frac{1}{\gamma W_T} \E \norm{x_0 - x_*}^2 
        + \frac{\gamma \sigma^2}{M}
        + 2 L_R (H-1) \gamma^2 \sigma^2.
    \end{align*}
Applying the Jensen's inequality \eqref{ap:Jensen} with $\tilde{x}_T := \frac{1}{W_T} \summ{t=0}{T} w_t \bar{x}_t$ and using the bound $W_T \geq (1 - \mu \gamma)^{-T} \geq \exp{(\mu \gamma T)}$, we have
    \begin{align*}
        \frac{1}{6} \E [F(\tilde{x}_T) - F(x_*)] 
        \leq 
        \frac{1}{\gamma} \exp{(-\mu \gamma T)} \norm{x_0 - x_*}^2
        + \frac{\gamma \sigma^2}{M}
        + 2 L_R (H-1) \gamma^2 \sigma^2.
    \end{align*}
Recalling \cref{st:st_1}, $\gamma = \frac{1}{6L}$ and $2L \geq \mu T$,
    \begin{align*}
        \E [F(\tilde{x}_T) - F(x_*)] 
        &\leq 
        36 L\exp{\left(-\frac{\mu T}{6L}\right)} \norm{x_0 - x_*}^2
        + \frac{2 \sigma^2}{\mu T M}
        + \frac{12 L_R (H-1) \sigma^2}{9\mu^2 T^2} \\
        &=
        \O \left( 
        L\exp{\left(-\frac{\mu T}{6 L}\right)} \norm{x_0 - x_*}^2
        + \frac{\sigma^2}{\mu T M}
        + \frac{\varepsilon L \sigma^2}{\mu^2 TK}
        \right),
    \end{align*}
which finishes the proof.

\noindent\textbf{Case (b)}: $T > 2\kappa$. Without restriction of generality, we can put $T$ even to get a simpler notation.
Let us choose the stepsize in the following way:
    \begin{align*}
    \gamma_t=
        \begin{cases}
             \frac{1}{6L} & \text{if } t \leq \frac{T}{2};
            \\
            \frac{2}{\mu(\xi + t)} & \text{if } t > \frac{T}{2},
        \end{cases}
    \end{align*}    
where $\xi = \frac{12 L}{\mu} - \frac{T}{2}$. It is noteworthy that $\gamma_t \leq \frac{1}{6L}$. Therefore, we can apply \cref{lem:lemma_Vt}.

\noindent Since $F(x) - F^* \geq 0$ for every $x$, we start as follows:
    \begin{align*}
        \E \norm{x_{T/2} - x_*}^2 
        \leq
        (1 - \gamma \mu) \E \norm{x_{T/2 - 1} - x_*}^2
        + \frac{\gamma^2 \sigma^2}{M}
        + 2 L_R (H-1) \gamma^3 \sigma^2.
    \end{align*}
Running the recursion for the relation above, we gain
    \begin{align}
        \E \norm{x_{T/2} - x_*}^2 
        &\leq
        (1 - \gamma \mu)^{T/2} \norm{x_{0} - x_*}^2
        + \frac{\gamma \sigma^2}{\mu M}
        + \frac{2 L_R (H-1) \gamma^2 \sigma^2}{\mu} \nonumber\\
        &=
        \left(1 - \frac{\mu}{6L}\right)^{T/2} \norm{x_{0} - x_*}^2
        + \frac{\sigma^2}{6L \mu M}
        + \frac{L_R (H-1) \sigma^2}{18 \mu L^2} \nonumber\\
        &\leq
        \left(1 - \frac{\mu}{6L}\right)^{T/2} \norm{x_{0} - x_*}^2
        + \frac{\sigma^2}{6L \mu M}
        + \frac{\varepsilon (H-1) \sigma^2}{18 \mu L} \nonumber\\
        &\leq
        \exp{\left(\frac{-\mu T}{12L}\right)} \norm{x_{0} - x_*}^2
        + \frac{\sigma^2}{6L \mu M}
        + \frac{\varepsilon (H-1) \sigma^2}{18 \mu L}, \label{eq: main}
    \end{align}
where we use that $\sum\limits_{i=0}^t (1 - \gamma \mu)^i \leq \frac{1}{\gamma \mu}$ and $L_R = \varepsilon L$. For the second part we apply similar technique as in the \textbf{case (a)}, but with $w_t = \xi + t$ and $W_T = \summ{t=T/2}{T-1} w_t$:
    \begin{align*}
        \frac{1}{6 W_T}& \summ{t=T/2}{T-1} w_t \E [F(\bar{x}_t) - F(x_*)] \nonumber\\
        \leq&\
        \frac{1}{W_T} \summ{t=T/2}{T-1}
        w_t \Bigg(
        \bigg(\frac{1}{\gamma_t} - \mu\bigg) \E \norm{x_t - x_*}^2
        - \frac{1}{\gamma_t} \E \norm{x_{t+1} - x_*}^2
        \nonumber\\&\hspace{5.7cm}+ \frac{\gamma \sigma^2}{M}
        + 2 L_R (H-1) \gamma^2 \sigma^2 
        \Bigg) \nonumber\\
        =&\
        \frac{1}{W_T} \summ{t=T/2}{T-1}
        \Bigg(
        \mu\bigg(\frac{\xi}{2} + \frac{t}{2}\bigg)(t + \xi - 2) \E \norm{x_{t} - x_*}^2
        - \frac{\mu (t+ \xi)^2}{2} \E \norm{x_{t+1} - x_*}^2
        \nonumber\\&\hspace{5.9cm}+ \frac{2 \sigma^2}{\mu M}
        + \frac{8 L_R (H-1) \sigma^2}{\mu^2 (\xi + t)}
        \Bigg) \nonumber\\
        \leq&\
        \frac{1}{W_T} \summ{t=T/2}{T-1}
        \Bigg(
        \frac{\mu (t + \xi - 1)^2}{2} \E \norm{x_{t} - x_*}^2
        - \frac{\mu(t + \xi)^2}{2} \E \norm{x_{t+1} - x_*}^2
        \nonumber\\&\hspace{5.9cm}+ \frac{2 \sigma^2}{\mu M}
        + \frac{8 L_R (H-1) \sigma^2}{\mu^2 (\xi + t)}
        \Bigg) \nonumber\\
        \leq&\
        \frac{\mu (T/2 + \xi - 1)^2}{W_T} \E \norm{x_{T/2} - x_*}^2
        + \frac{T \sigma^2}{W_T \mu M}
        + \frac{8 L_R (H-1) \sigma^2}{W_T\mu^2} \summ{t=T/2}{T-1} \frac{1}{\xi + t}.
    \end{align*}
Recalling the definition of $\xi$,
    \begin{align*}
        \summ{t=T/2}{T-1} \frac{1}{\xi + t} 
        &= 
        \summ{t=0}{T/2 - 1} \frac{1}{\left(t + \frac{12L}{\mu}\right)}
        \leq
        \summ{t=0}{T/2} \frac{1}{t + \frac{L}{\mu}} \leq \int\limits_{1}^{T/2 + L/\mu} \frac{dx}{x} + \frac{\mu}{L} \\&= \frac{\mu}{L} + \log\left(T/2 + \frac{L}{\mu}\right) \leq \frac{\mu}{L} + \log\left(T\right),
    \end{align*}
where we use $T \geq 2\kappa$. Therefore,
    \begin{align*}
        &\frac{1}{6 W_T} \summ{t=T/2}{T-1} w_t \E [F(\bar{x}_t) - F(x_*)] \\
        &\leq
        \frac{\mu (T/2 + \xi - 1)^2}{W_T} \E \norm{x_{T/2} - x_*}^2
        + \frac{T \sigma^2}{W_T \mu M}
        + \frac{8 L_R (H-1) \sigma^2}{W_T\mu^2} \left(\frac{\mu}{L} + \log\left(T\right)\right) \\
        &=
        \frac{\mu (12 L/\mu - 1)^2}{W_T} \E \norm{x_{T/2} - x_*}^2
        + \frac{T \sigma^2}{W_T \mu M}
        + \frac{8 L_R (H-1) \sigma^2}{W_T\mu^2}\left(\frac{\mu}{L} + \log\left(T\right)\right).
    \end{align*}
Observe that
    \begin{align}
    \label{eq: w_t_b}
        W_T = \summ{t=T/2}{T-1} (\xi + t) = \summ{t=0}{T/2-1} \left(\frac{12L}{\mu} + t\right) \geq \frac{T^2}{16}, 
    \end{align}
since, without loss of genetality, $T \geq 4$.
Thus, substituting the inequality above, we obtain:
    \begin{align}
        \frac{1}{6 W_T} \summ{t=T/2}{T-1} w_t \E [F(\bar{x}_t) - F(x_*)]
        \leq&\
        \frac{\mu(12L/\mu - 1)^2}{W_T} \E \norm{x_{T/2} - x_*}^2 
        + \frac{16 \sigma^2}{\mu T M}
        \nonumber\\&+ \frac{128 L_R (H-1) \sigma^2}{T^2 \mu^2} \left(\frac{\mu}{L} + \log\left(T\right)\right). \label{eq: wow_0}
    \end{align}
Applying the Jensen's inequality \eqref{ap:Jensen} with $\tilde{x}_T = \frac{1}{W_T} \summ{t=T/2}{T-1} w_t \bar{x_t}$:
    \begin{align}\label{eq: wow}
        F(\tilde{x}_T) \leq \frac{1}{W_T} \summ{t=T/2}{T} w_t F(\bar{x}_t).
    \end{align}
Using that $T \geq \frac{2L}{\mu}$ and combining \eqref{eq: main}, \eqref{eq: w_t_b}, \eqref{eq: wow_0} and \eqref{eq: wow}, we have
    \begin{align*}
        \E [F(\tilde{x}_t) - F(x_*)]
        \leq&\
        \frac{6\mu (12L/\mu - 1)^2}{W_T} \left(
        \exp{\left(\frac{-\mu T}{12L}\right)} \norm{x_{0} - x_*}^2 
        + \frac{\sigma^2}{6L \mu M}
        + \frac{\varepsilon (H-1) \sigma^2}{18 \mu L}
        \right) \\
        &+ \frac{96 \sigma^2}{\mu T M}
        + \frac{768 L_R (H-1) \sigma^2}{T^2 \mu^2} \left(\frac{\mu}{L} + \log\left(T\right)\right) \\
        \leq&\
        \frac{6\mu(12L/\mu)^2}{L^2 / 4\mu^2} 
        \exp{\left(\frac{-\mu T}{12L}\right)} \norm{x_{0} - x_*}^2
        + 
        \frac{(12L / \mu)^2 \sigma^2}{W_T L M}
        \\
        &+
        \frac{(12 L/\mu)^2 \varepsilon H \sigma^2}{3 W_T L} + \frac{96 \sigma^2}{\mu T M}
        + \frac{768 L_R (H-1) \sigma^2}{T^2 \mu^2}\left(\frac{\mu}{L} + \log\left(T\right)\right) \\
        \leq&\
        3456\mu \exp{\left(\frac{-\mu T}{12L}\right)} \norm{x_{0} - x_*}^2
        +\frac{12^2 \cdot 8 \sigma^2}{\mu TM} 
        +\frac{16^2 \cdot 3 \varepsilon L H \sigma^2}{\mu^2 T^2} \\
        &+ \frac{96 \sigma^2}{\mu T M}
        + \frac{768 \varepsilon L H \sigma^2}{T^2 \mu^2} \left(\frac{\mu}{L} + \log\left(T\right)\right) \\
        =&\
        \tilde{\O}
        \left(
        L \exp{\left(\frac{-\mu T}{L}\right)} \norm{x_{0} - x_*}^2
        +\frac{\sigma^2}{\mu TM}
        +\frac{\varepsilon L \sigma^2}{\mu^2 TK}
        \right),
    \end{align*}
which completes the proof.  \qed 
\end{proof}
\section{Proof of Theorem~\ref{th:th_2}}
\label{sec: proof-thm-2}
\begin{proof}
    Considering constant stepsizes $\gamma_t = \gamma \leq \frac{1}{6L}$, $\mu = 0$, $\rho = 0$ and applying results from Lemmas \ref{lem:rho_2} and \ref{lem:lemma_Vt} to \ref{lem:very_main}, we obtain:
    \begin{align*}
            \E \norm{\bar{x}_{t+1}-x_*}^2
            \leq
            \E \norm{\bar{x}_t - x_*}^2 
            + \frac{\gamma^2 \sigma^2}{M}
            - \frac{\gamma}{6} \E [F(\bar{x}_t) - F(x_*)] 
            + 2 \gamma L_R (H-1) \gamma^2 \sigma^2.
    \end{align*}
    Let us denote $r_t = \bar{x}_t - x_*$. Hence, rearranging the above equation yields:
    \begin{align*}
            \frac{\gamma}{6} \E [F(\bar{x}_t) - F(x_*)]
            \leq
            \E \norm{r_t}^2 
            - \E \norm{r_{t+1}}^2
            + \frac{\gamma^2 \sigma^2}{M}
            + 2 L_R (H-1) \gamma^3 \sigma^2.
    \end{align*}
Averaging over $t$,
    \begin{align}
            \frac{\gamma}{6 T} \sum_{t=0}^{T-1} \E [F(\bar{x}_t) - F(x_*)]
            \leq&\ 
            \frac{1}{T} \sum_{t=0}^{T-1} 
            \left(\E \norm{r_t}^2 
            - \E \norm{r_{t+1}}^2\right)
            + \frac{\gamma^2 \sigma^2}{M}
            \nonumber\\&+ 2 L_R (H-1) \gamma^3 \sigma^2 \nonumber\\
            =&\ \frac{\norm{r_0}^2 - \E\norm{r_T}^2}{T}
            + \frac{\gamma^2 \sigma^2}{M}
            + 2 L_R (H-1) \gamma^3 \sigma^2  \nonumber\\
            \leq&\
            \frac{\norm{r_0}^2}{T}
            + \frac{\gamma^2 \sigma^2}{M}
            + 2 L_R (H-1) \gamma^3 \sigma^2. \label{eq:avergingovert}
    \end{align}
    For $\hat{x}_T = \frac{1}{T} \sum_{t=0}^{T-1} \bar{x}_t$, let us apply the Jensen's inequality \eqref{ap:Jensen}:
    \begin{align}
            \E [F(\hat{x}_T) - F(x_*)] \leq \frac{1}{T} \sum_{t=0}^{T-1} \E [F(\bar{x}_t) - F(x_*)]. \label{eq:jens}
    \end{align}
Plugging \eqref{eq:jens} into \eqref{eq:avergingovert},
    \begin{align*}
            \frac{\gamma}{6} \E [F(\hat{x}_T) - F(x_*)]
            &\leq \frac{\norm{r_0}^2}{T}
            + \frac{\gamma^2 \sigma^2}{M}
            + 2 L_R (H-1) \gamma^3 \sigma^2.
    \end{align*}
Dividing both sides by $\frac{\gamma}{6}$, we have:
    \begin{align*}
            \E [F(\hat{x}_T) - F(x_*)]
            &\leq \frac{6}{ \gamma T} \norm{r_0}^2
            + \frac{6 \gamma \sigma^2}{M}
            + 12 L_R (H-1) \gamma^2 \sigma^2.
    \end{align*}
Now, choosing $\gamma$ as follows:
    \begin{align*}
    \gamma = 
        \begin{cases}
            \min \left\{  \frac{1}{6L}, \frac{\norm{r_0} \sqrt{M}}{\sigma \sqrt{T}}  \right\},\qquad &\text{ if } H = 1 \text{ or } M = 1, \\
            \min \left\{ \frac{1}{6L}, \frac{\norm{r_0} \sqrt{M}}{\sigma \sqrt{T}}, \left(\frac{\norm{r_0}^2}{L_R \sigma^2TH} \right)^{1/3} \right\}, \qquad &\text { else, }
        \end{cases}
    \end{align*}
we conclude
    \begin{align*}
        \E [F(\hat{x}_T) - F(x_*)] 
        \leq&\
        \max \left\{ \frac{36L \norm{r_0}^2}{T}, \frac{6\sigma \norm{r_0}}{\sqrt{MT}},
        6\left( \frac{L_R \sigma^2 \norm{r_0}^4}{TK} \right)^{1/3}
        \right\} \\
        &+\ 
        \frac{6 \sigma \norm{r_0}}{\sqrt{MT}}
        +
        12 \cdot \left( \frac{L_R \sigma^2 \norm{r_0}^4}{TK} \right)^{1/3} \\
        =&\
        \O \left(
        \frac{L \norm{r_0}^2}{T} 
        + \frac{\sigma \norm{r_0}}{\sqrt{MT}} 
        + \left( \frac{\varepsilon L_R \sigma^2 \norm{r_0}^4}{TK} \right)^{1/3}
        \right).
    \end{align*}
This finalizes the proof. \qed
\end{proof}
\section{Proof of Theorem~\ref{th:mu>0,rho>0}}
\label{sec: proof-thm-3}
\begin{proof}
    We start with substituting the result of \cref{lem:rho_2} into \cref{lem:very_main}:
\begin{align*}
    \frac{\gamma_{t}}{6} \E [F(\bar{x}_t) - F(x_*)]
    \leq& (1 - \gamma_{t} \mu) \E \norm{r_t}^2 
    + \gamma_{t}^2 \E \norm{ \bar{\mc{g_t}} - \bar{g}_t}^2 - \E \norm{r_{t+1}}^2
    + 2 \gamma_{t} L_R \E [V_t] \\
    \leq&\
    (1 - \gamma_{t} \mu) \E \norm{r_t}^2 - \E \norm{r_{t+1}}^2
    + 2 \gamma_{t} L_R \E [V_t]\\&+ \gamma_{t}^2 \left(  \frac{\sigma^2}{M} + \frac{\rho L^2 }{M} \E[V_t] + \frac{\rho L^2 }{M}\E \norm{r_t}^2  \right).
\end{align*}
Suppose that $\gamma_{t}$ satisfies the assumption of \cref{lem:lemma_Vt} for the case $(d)$: $\gamma_t \leq \min\left\{ \frac{\mu}{3\rho L^2}, \frac{1}{6L} \right\}$. Therefore, with $t = kH + a$,
\begin{align}
\label{eq: main-prob}
    \frac{\gamma_{t}}{6} &\E [F(\bar{x}_t) - F(x_*)]\nonumber\\ \leq&\ (1 - \gamma_{t} \mu) \E \norm{r_t}^2 - \E \norm{r_{t+1}}^2
    + 2 \gamma_{t} L_R \E [V_t] + \gamma_{t}^2 \left(  \frac{\sigma^2}{M} + \frac{\rho L^2 }{M} V_t + \frac{\rho L^2 }{M}\E \norm{r_t}^2  \right)\nonumber\\
    =&\
    \left(1 - \gamma_{t} \mu + \frac{\gamma^2_t\rho L^2 }{M} \right) \E \norm{r_t}^2 - \E \norm{r_{t+1}}^2
    + \frac{\gamma_{t}^2\sigma^2}{M} + \left(\frac{\gamma_{t}^2 \rho L^2 }{M} + 2 \gamma_{t} L_R\right)\E[V_t]\nonumber \\
    \leq&\
    \left(1 - \frac{2\gamma_{t} \mu}{3}\right) \E \norm{r_t}^2 -\E \norm{r_{t+1}}^2+ \frac{\gamma_{t}^2\sigma^2}{M}
    + \left(\frac{\gamma_t\rho L^2}{M} + 2L_R\right)  \nonumber\\ &\times\gamma_t\left(\summ{i=kH}{kH+a - 1} \left(
                \rho \gamma_{i}^2 L^2 \E \norm{r_i}^2 + \gamma_{i}^2 \sigma^2\right)
                \prod\limits_{j = i + 1}^{kH+a - 1}\left(1 - \frac{\gamma_{j} \mu}{2}\right)\right),
\end{align}
where we use that $\gamma_t \leq \frac{\mu}{3\rho L^2}$. Next, assume that $\gamma_t \equiv \gamma$ and denote some parameters for the convenient reformulation of \eqref{eq: main-prob}:
\begin{equation}
    \label{eq: parameters}
    \begin{split}
        \beta_1 &:= 1 - \frac{2\gamma \mu}{3};\\
        \beta_2 &:= 1 - \frac{\gamma \mu}{2};\\
        C_0 &:= \frac{\gamma\rho L^2}{M} + 2 L_R;\\
        C &:= \frac{\mu}{3M} + 2 L_R \geq C_0;  
    \end{split}
\end{equation}
As a consequence, with \eqref{eq: parameters} we obtain
\begin{align}
    \label{eq: main-reform}
    \frac{1}{6} \E [F(\bar{x}_t) - F(x_*)] \leq&\
    \frac{\beta_1}{\gamma}\E \norm{r_t}^2 - \frac{1}{\gamma}\E \norm{r_{t+1}}^2+ \frac{\gamma\sigma^2}{M}
    +  \nonumber\\&+C_0\left(\summ{i=kH}{t - 1} \left(
                \rho \gamma^2 L^2 \E \norm{r_i}^2 + \gamma^2 \sigma^2\right)
                \beta_2^{t - i -1}\right).
\end{align}
The technique applied above is the same as in previous theorems. Nevertheless, we start with considering of only slice of total iterations to be summarized. That is, assume that $t \in \overline{nH, (n+1)H - 1}$ for some $n \in \mathbb{N}$. Accordingly, summing with weights $w_t$,
\begin{align*}
    \sum\limits_{t = nH}^{(n+1)H-1}\frac{w_t}{6}\E [F(\bar{x}_t) - F(x_*)] \leq& \sum\limits_{t = nH}^{(n+1)H-1} \Bigg(\frac{w_t\beta_1}{\gamma}\E \norm{r_t}^2 - \frac{w_t}{\gamma}\E \norm{r_{t+1}}^2+ \frac{w_t\gamma\sigma^2}{M}
    \nonumber\\+&\ C_0 w_t\left(\summ{i=nH}{t - 1} \left(
                \rho \gamma^2 L^2 \E \norm{r_i}^2 + \gamma^2 \sigma^2\right)
                \beta_2^{t - i -1}\right)\Bigg).
\end{align*}
Let us find the coefficient of $\E\norm{r_p}^2$ for $p \in \overline{nH, (n+1)H}$ in the inequality above. At first, start with $p \in \overline{nH + 1, (n+1)H - 2}$. After opening the brackets, it turns out that it is equal to
\begin{align}
    \label{eq: coef}
    \frac{w_p\beta_1}{\gamma} - \frac{w_{p-1}}{\gamma} + C_0\rho \gamma^2L^2\beta_2^{-p -1}\left(\sum\limits_{k=p+1}^{(n+1)H-1}w_k\beta_2^k \right).
\end{align}
Now, let us choose $w_t := \left(1 - \frac{\gamma\mu}{2}\right)^{-(t+1)}$. Therefore, the multiplicative factor can be bounded from above as
\begin{align}
    \label{eq: coef_r}
    \frac{w_p\beta_1}{\gamma} - \frac{w_{p-1}}{\gamma} + \frac{C\rho \gamma^2L^2Hw_p}{1 - \frac{\gamma\mu}{2}},
\end{align}
since $(n+1)H - p - 1 \leq H$. Multiplying both sides by $\left(1 - \frac{\gamma\mu}{2}\right)^{p+2}$ and substituting \eqref{eq: parameters}, we get
\begin{align*}
    \frac{1}{\gamma}\left(-\frac{\gamma\mu}{6}\left(1 - \frac{\gamma\mu}{2}\right)\right) + C\rho \gamma^2L^2H \leq 0.
\end{align*}
Solving the quadratic inequality and applying the fact that $\sqrt{a^2 + b^2} \leq a + b$, one can obtain
\begin{align}
    \label{eq: step-recur-choice}
    \gamma \leq \frac{\sqrt{\mu}}{\sqrt{6CH\rho L^2}}.
\end{align}
If we choose $\gamma$ according to \eqref{eq: step-recur-choice}, then for all $n$ and for all \\ $p: p \in \overline{nH + 1, (n+1)H - 2}$ the coefficient at $\E\norm{r_p}^2$ is less or equal to $0$. Moreover, the case $p = (n+1)H - 1$ is obvious since the factor is equal to 
\begin{align*}
    \frac{w_p\beta_1}{\gamma} - \frac{w_{p-1}}{\gamma}.
\end{align*}
Because of \eqref{eq: coef_r} it is less than $0$. The last case is when $p$ is the moment of communication. Since for the final convergence it is necessary to summarize all terms responsible for the period between communications, then for the option $p \mod H = 0$ the coefficient at $\E\norm{r_p}^2$ is equal to the sum of two terms: one part appears in summing from $nH$ to $(n+1)H-1$ and another one is in $(n-1)H$ to $nH-1$. However, it is equal to \eqref{eq: coef} with $p=nH$. Applying the bound  \eqref{eq: coef_r}, we claim that the factor is also less or equal to $0$. Consequently, with $T = KH$
\begin{align*}
    \sum\limits_{n=0}^{K-1}\sum\limits_{t = nH}^{(n+1)H-1}\frac{w_t}{6}\E [F(\bar{x}_t) - F(x_*)] \leq& \norm{r_0}^2\left(\frac{w_0}{\gamma}\beta_1 + C\rho\gamma^2 L^2w_0 \sum\limits_{k=1}^{H-1}\beta_2^kw_k\right)\nonumber\\&-\E\norm{r_T}^2\left(\frac{w_{T-1}}{\gamma}\right) \nonumber\\&+ \sum\limits_{n=0}^{K-1}\sum\limits_{t = nH}^{(n+1)H-1} C_0w_t\gamma^2\sigma^2\sum\limits_{i=nH}^{t-1}\beta_2^{t-i-1} \nonumber\\&+\sum\limits_{t = 0}^{T-1} \frac{w_t\gamma\sigma^2}{M} \nonumber \\\leq&\ 
    \norm{r_0}^2\left(\frac{w_0}{\gamma}\beta_1 + C\rho\gamma^2 L^2w_0 \sum\limits_{k=1}^{H-1}\beta_2^kw_k\right)\nonumber\\&+\sum\limits_{t = 0}^{T-1} \left(\frac{w_t\gamma\sigma^2}{M} + C_0w_tH\gamma^2\sigma^2\right). 
\end{align*}
Therefore, using that $\beta_1 \leq \beta_2$ and $\beta_2^k w_k = \frac{1}{1 -\frac{\gamma\mu}{2}}$, we have
\begin{align}
    \label{eq: final-333}
    \sum\limits_{n=0}^{K-1}\sum\limits_{t = nH}^{(n+1)H-1}\frac{w_t}{6}\E [F(\bar{x}_t) - F(x_*)] \leq&\ 
    \norm{r_0}^2\left(\frac{1}{\gamma} + \frac{CH\rho\gamma^2 L^2w_0}{1 -\frac{\gamma\mu}{2}}\right)\nonumber\\&+\sum\limits_{t = 0}^{T-1} \left(\frac{w_t\gamma\sigma^2}{M} + C_0w_tH\gamma^2\sigma^2\right) \nonumber \\\leq&\ 
    \norm{r_0}^2\left(\frac{1}{\gamma} + \frac{24 \mu}{121}\right)\nonumber\\&+\sum\limits_{t = 0}^{T-1} \left(\frac{w_t\gamma\sigma^2}{M} + C_0w_tH\gamma^2\sigma^2\right),
\end{align}
since $\gamma \leq \frac{1}{6L}$ and $\gamma \leq \frac{\sqrt{\mu}}{\sqrt{6CH\rho L^2}}$. Dividing both sides by $W_t = \sum\limits_{t=0}^{T-1}w_t$ and applying the Jensen's inequality \eqref{ap:Jensen} to \eqref{eq: final-333}, one can obtain
\begin{align*}
    \E [F(\tilde{x}_t) - F(x_*)] \leq \frac{6\norm{r_0}^2}{W_T}\left(\frac{1}{\gamma} + \frac{24\mu}{121}\right) + \frac{6\gamma\sigma^2}{M} + 6C_0H\gamma^2\sigma^2.
\end{align*}
Choosing $\gamma$ as $\min\left\{\frac{\mu}{3\rho L^2}, \frac{1}{6L}, \frac{\sqrt{\mu}}{\sqrt{6CH\rho L^2}}, \frac{\ln\left(\max\left\{2, \mu^2\norm{r_0}^2T^2M/\sigma^2\right\}\right)}{2\mu T}\right\}$ and using that $W_T \geq w_{T-1} = \left(1 - \frac{\gamma\mu}{2}\right)^{-T} \geq \exp\left(\frac{\gamma\mu T}{2}\right)$, we get that if $\gamma$ is equal to the first, second or third options, then
\begin{align*}
    \E [F(\tilde{x}_t) - F(x_*)] =&\ \tilde{\O}\Bigg(\norm{r_0}^2\max\left\{\frac{\rho L^2}{\mu}, L, \frac{\sqrt{(\mu + L_R)H\rho L^2}}{\sqrt{\mu}}\right\}\\&\times\exp\left(-\mu T \min\left\{\frac{\mu}{\rho L^2}, \frac{1}{L}, \frac{\sqrt{\mu}}{\sqrt{(\mu + L_R)H\rho L^2}}\right\}\right) \\&+ \frac{\sigma^2}{\mu M T} + \frac{\rho L^2H\sigma^2}{M\mu^3T^3}+  \frac{L_RH\sigma^2}{\mu^2 T^2}\Bigg),
\end{align*}
where we substitute $C_0$ from \eqref{eq: parameters}. Moreover, the fourth choice of $\gamma$ gives
\begin{align*}
    \E [F(\tilde{x}_t) - F(x_*)] =\tilde{\O}\Bigg(\frac{\sigma^2}{\mu M T} + \frac{\rho L^2H\sigma^2}{M\mu^3T^3}+  \frac{L_RH\sigma^2}{\mu^2 T^2}\Bigg).
\end{align*}
Combining two upper bound above we finish the proof. \qed
\end{proof}


\end{document}